\numberwithin{equation}{section}
\newcommand{\N}{\mathbb{N}}
\newcommand{\R}{\mathbb{R}}
\newcommand{\Z}{\mathbb{Z}}
\newcommand{\ZN}{\Z_N}
\newcommand{\Id}{\text{Id}}
\newcommand{\deq}{\overset{d}{=}}
\newcommand{\f}{\frac}
\newcommand{\mbf}{\mathbf}
\newcommand{\Prob}{\mathbb{P}}
\newcommand{\diag}{\operatorname{diag}}
\newcommand{\Geom}{\text{Geom}}
\newcommand{\Exp}{\text{Exp}}
\newcommand{\LIG}{\text{Log-Inv-Gamma}}
\newcommand{\vecsum}{\mathfrak s}
\newcommand{\bx}{\mathbf{x}}
\newcommand{\zD}{\widetilde{D}}
\newcommand{\zT}{\widetilde{T}}
\newcommand{\jZ}{j \in \ZN}
\newcommand{\iZ}{i \in \ZN}
\newcommand{\zP}{\widetilde{\mathcal{P}}}
\newcommand{\Pcal}{\mathcal{P}}
\newcommand{\be}{\begin{equation}}
\newcommand{\ee}{\end{equation}}
\newtheorem{theorem}{Theorem}[section]
\newtheorem{proposition}[theorem]{Proposition}
\newtheorem{lemma}[theorem]{Lemma}
\theoremstyle{definition}
\newtheorem{definition}[theorem]{Definition}
\theoremstyle{remark}
\newtheorem{remark}[theorem]{Remark}
\definecolor{darkblue}{rgb}{0.0,0.0,0.7}
\definecolor{darkred}{rgb}{0.5,0.0,0.0}
\definecolor{darkgreen}{rgb}{0.0,0.5,0.0}
\definecolor{indigo}{rgb}{0.3,0,0.5}
\definecolor{byzantium}{rgb}{0.44, 0.16, 0.39}
\title[Discrete periodic Pitman transform]{The discrete periodic Pitman transform: invariances, braid relations, and Burke properties}
\author[Engel]{Eva R. Engel}
\address{Eva R. Engel, Princeton University, Mathematics Department,
Princeton, NJ 08544, USA.}
\email{eva.engel@princeton.edu}
\author[Kra-Caskey]{Benjamin Jasper Kra-Caskey}
\address{Benjamin Jasper Kra-Caskey,  Columbia University, Mathematics Department,
New York, NY 10027, USA.}
\email{bjk2168@columbia.edu}
\author[Lazorenko]{Oleksandr Lazorenko}
\address{Oleksandr Lazorenko,  Columbia University, Mathematics Department,
New York, NY 10027, USA.}
\email{ol2267@columbia.edu}
\author[Maia de Olivera]{Caio Hermano Maia de Oliveira}
\address{Caio Hermano Maia de Oliveira,  Columbia University, Mathematics Department,
New York, NY 10027, USA.}
\email{cm4422@columbia.edu}
\author[Sorensen]{Evan Sorensen}
\address{Evan Sorensen,  Columbia University, Mathematics Department,
New York, NY 10027, USA.}
\email{evan.sorensen@columbia.edu}
\author[Wong]{Ivan Wong}
\address{Ivan Wong,  Columbia University, Mathematics Department,
New York, NY 10027, USA.}
\email{iw2261@columbia.edu}
\author[Xu]{Ryan Xu}
\address{Ryan Xu,  Columbia University, Mathematics Department,
New York, NY 10027, USA.}
\email{kx2186@columbia.edu}
\author[Zhang]{Xinyi Zhang}
\address{Xinyi Zhang,  Columbia University, Mathematics Department,
New York, NY 10027, USA.}
\email{xz3272@columbia.edu}
\begin{document}

\begin{abstract}
We develop the theory of the discrete periodic Pitman transform, first introduced by Corwin, Gu, and the fifth author. We prove that the discrete periodic Pitman transform satisfies the same braid relations that are satisfied for the full-line Pitman transform shown by Biane, Bougerol, and O'Connell. This defines a group action of the infinite symmetric group on sequences of vectors in $\mathbb R^{\mathbb Z_N}$.  We prove that, for polymers in a periodic environment, single-path and multi-path partition functions are preserved under the action of this transform on the weights in the polymer model.  Combined with a new inhomogeneous Burke property for the periodic Pitman transform, we prove a multi-path invariance result for the periodic inverse-gamma polymer under permutations of the column parameters. In the limit to the full-line case, we obtain a multi-path extension of a recent invariance result of Bates, Emrah, Martin, Sepp\"al\"ainen, and the fifth author, in both positive and zero-temperature. 
\end{abstract}

\maketitle

\setcounter{tocdepth}{1}
\tableofcontents

\section{Introduction}

The study of the Pitman transform traces back to Pitman's celebrated $2M-X$ theorem \cite{pitman1975one}. Since then, there have been numerous studies around this transform and its many variants. In the semi-discrete case, the positive-temperature lifting of the full-line Pitman transform was introduced by \cite{matsumoto2000analogue,o2002random} in the context of Brownian queuing process and later by \cite{o2012directed} in the setting of semi-discrete directed polymers. In the fully discrete case, the discrete geometric Pitman transform was connected to the geometric lifting of the classical Robinson-Schensted-Knuth (RSK) correspondence \cite{kirillov2001introduction} and was studied in \cite{noumi2002tropicalrobinsonschenstedknuthcorrespondencebirational,COSZ-2014,OSZ-2014,corwin2020invariancepolymerpartitionfunctions} in the context of discrete directed polymer models.

Despite arising in different contexts, these Pitman transforms share a rich set of algebraic and probabilistic properties that have enabled significant progress in the study of stochastic integrable systems, including directed percolation, random polymers, and the KPZ equation. For example, the Burke property--the analogous product measure preservation property of Burke’s theorem for $M/M/1$ queues \cite{burke56}--has been proved in several contexts (see \cite{hsu76,o2001brownian,oconnell02,KOR2002,Martinbatch,DMO,aldo-diac95} as well as \cite{Emrah-Janjigian-Seppalainen-2024,EEEJT} in an inhomogeneous setting) and has been used to compute the fluctuation exponents for several last-passage percolation and polymer models \cite{Cator-Groeneboom-2006,Balazs-Cator-Seppalainen-2006,Seppalainen-Valko-2010,Sepp_l_inen_2012,Emrah-Janjigian-Seppalainen-2023,Landon-Sosoe-Noack-2023}.  Moreover, the joint invariance of last-passage values and polymer partition functions under the zero-temperature and positive-temperature Pitman transforms provides key tools for constructing central limiting objects in the KPZ universality class \cite{DOV} and for proving convergence of many different models to these limiting objects \cite{dauvergne2021scaling,wu2023kpz,Aggarwal-Corwin-Hegde-2024,zhang2025convergence}.

The purpose of this paper is to demonstrate the rich algebraic and probabilistic structure of a periodic version of the Pitman transform, recently introduced in \cite{corwin2024periodicpitmantransformsjointly}. Our first main result, Theorem \ref{thm:braid} proves that the periodic Pitman transform is an involution and proves braid relations for these transforms, analogous to the relations enjoyed by the full-line Pitman transform \cite{biane2005littelmannpathsbrownianpaths} (see also \cite{Dauvergne-Nica-Virag-2022,dauvergne-virag-24}). More specifically, we consider a collection of operators on the space $(\R^{\ZN})^\Z$ of bi-infinite sequences of $N$-periodic vectors and operators $\Pcal_k$, which replace the $k$th and $k +1$st vector with their Pitman transform. These relations show that this collection of operators generates a copy of the infinite symmetric group, which defines a group action on the space of infinite sequences of vectors in $\R^{\Z_N}$.

Our next main result, Theorem \ref{thm:polymer_invariance}, shows that single-path and multi-path partition functions for directed polymers in a periodic environment are preserved under this group action on the columns of weights for the polymer. Previous results of this flavor in a non-periodic setting have appeared in several contexts and are discussed after the statement of the theorem. The key machinery is the encoding of geometric RSK in terms of matrix relations from \cite{noumi2002tropicalrobinsonschenstedknuthcorrespondencebirational}. Corwin \cite{corwin2020invariancepolymerpartitionfunctions} previously used these tools to prove the analogous invariance for full-space polymers. 

We then utilize the invariance of partition functions and a new inhomogeneous Burke property (Proposition \ref{burke-pos-temp}) to prove a distributional invariance of the inverse-gamma polymer partition function under the permutation of the row and column parameters in both the periodic and full-space cases (Theorems \ref{thm:per_permutation-invariance} and \ref{thm:full_per_invariance}). In the full-line case, this gives a multi-path and positive-temperature extension of a recent result of Bates, Emrah, Martin, Sepp\"al\"ainen, and the fifth author \cite{EEEJT}. This result bears similarity to an invariance found previously by Dauvergne \cite{Dauvergne-2022}. See the discussion before and after Theorem \ref{thm:full_per_invariance} for more on the connection to other works. Each of our main results has an analogue in zero-temperature. These are stated in Section \ref{sec:zero_temp_intro}.

\subsection{The positive-temperature discrete periodic Pitman transform}

For $N \in \N$, let $\R^{\ZN}$ denote the set of $N$-tuples of real numbers, together with periodic indexing. That is, for $\mbf X \in \R^{\ZN}$, we may write $\mbf X = (X_1,X_2,\ldots,X_{N})$, where the indices represent elements of $\ZN$ with addition modulo $N$. For a vector $\mbf X\in \R^{\ZN}$ and indices $i,\jZ$, we set 
\be\label{eq:index-periodic-sum}
\mbf X_{(i,j]} := \sum_{\ell = i+1}^j X_\ell,\quad\text{and}\quad \mbf X_{[i,j]} := \sum_{\ell = i}^j X_\ell,
\ee
where the sums are taken in cyclic order  (see Figure \ref{fig:per_index}). We follow the convention that $\mbf X_{(i,i]} = 0$. 

\begin{figure}[ht]
\centering
\begin{tikzpicture}[scale = 0.5]
        \draw (0,0) rectangle (8,1); 
        \draw (1,1) -- (1, 0);
        \draw (2,1) -- (2, 0);
        \draw (3,1) -- (3, 0);
        \draw (4,1) -- (4, 0);
        \draw (5,1) -- (5, 0);
        \draw (6,1) -- (6, 0);
        \draw (7,1) -- (7, 0);
        \draw[fill = gray] (0,0) rectangle (1,1); 
        \draw[fill = gray] (1,0) rectangle (2,1);
        \draw[fill = gray] (5,0) rectangle (6,1);  
        \draw[fill = gray] (6,0) rectangle (7,1);  
        \draw[fill = gray] (7,0) rectangle (8,1);  
        \node at (1.5,1.75) {\textbf{j]}};
        \node at (5.5,1.75) {\textbf{[i}};
        \draw[->, thick] (0, 1.75) -- (1, 1.75);
        \draw[->, thick] (6, 1.75) -- (8, 1.75);
    \end{tikzpicture}
      \caption{Representation of indices $[i,j]$, demonstrating periodicity.}
      \label{fig:per_index}
\end{figure}

The \textit{positive-temperature discrete periodic Pitman transform} is the map $\Pcal:\R^{\ZN} \times \R^{\ZN} \to \R^{\ZN} \times \R^{\ZN}$ defined by
\be\label{eq:pitman-transform}
\begin{aligned}
\Pcal(\mbf{X}_1,\mbf{X}_2) &:= (T(\mbf{X}_1,\mbf{X}_2),D(\mbf{X}_1,\mbf{X}_2)),\quad\text{where} \\
T_i(\mbf X_1,\mbf X_2) &:= X_{2,i-1} + \log \Biggl(\f{\sum_{\jZ} e^{Y_{[i-1,j]}}}{\sum_{\jZ}e^{Y_{[i,j]}}}   \Biggr),\quad\text{and} \\
D_i(\mbf X_1,\mbf X_2) &:= X_{1,i+1} + \log\Biggl(\f{\sum_{\jZ} e^{Y_{(i,j]}}}{\sum_{\jZ}e^{Y_{(i-1,j]}}}\Biggr),\quad\text{where}\quad Y_\ell = X_{1,\ell+1} - X_{2,\ell}.
\end{aligned}
\ee
This map is a shifted version of the map studied in \cite{corwin2024periodicpitmantransformsjointly} (see Appendix \ref{sec:DT-basic-properties}). Note that the maps $\Pcal,T,D$ depend on the length $N$ of the periodicity; we will subsume dependence on this value in  the notation and fix the positive integer $N$, unless otherwise noted.

Let $(\R^{\ZN})^\Z$ be the set of bi-infinite sequences of elements of $\R^{\ZN}$. For $k \in \Z$, we define the operator $\Pcal_k:(\R^{\ZN})^\Z \to (\R^{\ZN})^\Z$ by its action:
\be \label{eq:Pi_def}
\Pcal_k(\ldots, \mbf X_{k-1},\mbf X_k, \mbf X_{k+1},\mbf X_{k+2},\ldots ) = (\ldots,\mbf X_{k-1},T(\mbf X_{k},\mbf X_{k+1}),D(\mbf X_{k},\mbf X_{k+1}),\mbf X_{k+2},\ldots).
\ee
Our first main result is the following:
\begin{theorem}  \label{thm:braid}
    The operators $\Pcal_k$ satisfy the following:
    \begin{enumerate} [label=(\roman*),font=\normalfont]
        \item \label{it:involution} For $k \in \Z$, the map $\Pcal_k$ is an involution.
        \item \label{it:braid} For $k \in \Z$,
        \[
    \Pcal_k \Pcal_{k+1} \Pcal_k = \Pcal_{k+1} \Pcal_k \Pcal_{k+1}.
        \]
        \item \label{it:commute} For $|k - j| > 1$,
        \[
        \Pcal_j \Pcal_k = \Pcal_k \Pcal_j.
        \]
    \end{enumerate}
\end{theorem}
The commutativity in Item \ref{it:commute} of Theorem \ref{thm:braid} is immediate from the definition. Item \ref{it:involution} was previously proved in \cite[Corollary 5.9]{corwin2024periodicpitmantransformsjointly}, although it is stated slightly differently there for a shifted version of the operator $\mathcal P$ (see Appendix \ref{sec:DT-basic-properties}). In the present paper, we give an alternative, self-contained proof of Item \ref{it:involution} in Section \ref{subsec:braid}.  Item \ref{it:braid} is called the braid relation, and it is also proved in Section \ref{subsec:braid}. Crucial to the proof is a matrix relation in Proposition \ref{prop:H_relation}, which is also used to prove Theorem \ref{thm:polymer_invariance}. This matrix relation comes from adapting the matrix representation of geometric RSK from the work of Noumi and Yamada \cite{noumi2002tropicalrobinsonschenstedknuthcorrespondencebirational} to the periodic case.

The braid relation consists of three equalities (\eqref{eq:braid-1}-\eqref{eq:braid-3}), the third of which was previously proved in \cite[Proposition 5.13]{corwin2024periodicpitmantransformsjointly}. From there, we iterate the matrix relation in Proposition \ref{prop:H_relation} in two different ways and use an interpretation of the matrix product as a polymer partition function (lemma \ref{lem:H_partition}) to obtain the other two equalities. Specifically, Lemma \ref{lem:sum-partition-inv} is the key innovation that allows us to complete the proof.  

 We call a permutation $\sigma:\Z \to \Z$ a \textit{finite permutation} if  $\sigma$ fixes all but finitely many coordinates. Let $\mathcal S_\Z$ denote the group of these finite permutations under composition of functions. The relations satisfied by the maps $\Pcal_k$ in Theorem \ref{thm:braid} are the same as those satisfied by the nearest-neighbor transpositions $(k,k+1)$, which generate the group $\mathcal S_\Z$. Thus, if $\sigma$ is any such finite permutation, if we decompose it in two different ways as
\[
\sigma = (k_1,k_1 + 1)(k_2,k_2 + 1) \cdots (k_n,k_n + 1) = (\ell_1,\ell_1 + 1)(\ell_2,\ell_2 + 1) \cdots (\ell_r,\ell_r + 1),
\]
then Theorem \ref{thm:braid} implies that we have the following equality of operators:
\[
\Pcal_{k_1}\Pcal_{k_2} \cdots \Pcal_{k_n} = \Pcal_{\ell_1}\Pcal_{\ell_2} \cdots \Pcal_{\ell_r}. 
\]
For any finite permutation $\sigma$, we then define the operator $\Pcal_\sigma$ by the product above. The operators $\Pcal_\sigma$ therefore define a group action of $\mathcal S_\Z$ on the space $(\R^{\ZN})^\Z$.

\subsection{Polymers on $\Z^2$ and periodic environments}
    Consider the 2-dimensional integer lattice $\Z^2$. We say that an \textit{up-right path} of length $L\in\N$ is an ordered collection of points
    \be\label{eq:up_right_path}
    \pi:=\left\{(a_i,b_i)_{i\in\llbracket 1,L\rrbracket}\mid(a_{i+1},b_{i+1})-(a_i,b_i)\in\{(1,0),(0,1)\} \ \forall 1\le i<L\right\},
    \ee
    where $(a_i,b_i)\in\Z^2\ \forall i$. Here, the notation $\llbracket a,b \rrbracket$ denotes the set of $k \in \Z$ satisfying $a \le k \le b$. We also define the set of all up-right paths to be $\Pi_{\Z^2}$.

    For a fixed path $\pi$, we say that $(a_1,b_1)$ is the \textit{initial point} of the path while $(a_{L},b_{L})$ is the \textit{end point} of the path. For given start and end points $(i,j),(n,m)\in\Z^2$, we define 
    \be\label{eq:up_right_paths}
    \Pi_{(i,j),(n,m)}:=\{\pi \in\Pi_{\Z^2}\mid (a_1,b_1)=(i,j) \ ,(a_{L},b_{L})=(n,m)\}
    \ee
    to be the set of paths starting at $(i,j)$ and ending at $(n,m)$, noting that $\Pi_{(i,j),(n,m)}\ne\varnothing$ if and only if  $m\ge j $ and $n\ge i$. In this case,  $L=m+n-j-i$ for any $\pi$ in $\Pi_{(i,j),(n,m)}$. We say that two paths $\pi_1,\pi_2\in\Pi_{\Z^2}$ are \textit{non-intersecting} if $\pi_1\cap\pi_2=\varnothing$. We say that a non-intersecting $k$-tuple of paths $(\pi_1,\ldots,\pi_k)$ is called a \textit{multi-path}. See Figure \ref{fig:multi-path} for a visual example.  
    
    As in the case of single paths, we can consider the set of paths between given start and end points. For $k\in\N$ and $k$-tuples of  ordered pairs $U=\bigl((a_i,b_i)\bigr)_{i\in\llbracket1,k\rrbracket}$, $V=\bigl((c_i,d_i)\bigr)_{i\in\llbracket1,k\rrbracket}$, we define the set of multi-paths between them to be  
    \be \label{eq:multipaths}
    \Pi_{U,V}:=\{\pi=(\pi_1,\dots,\pi_k)\mid\pi_i\in\Pi_{(a_i,b_i),(c_i,d_i)} \ \forall i\in\llbracket1,k\rrbracket,\ \pi_i\cap \pi_j=\varnothing \  \forall i\ne j \in \llbracket1,k\rrbracket\}.
    \ee
    We will often require that  there is a unique pairing of initial points in $U$ and end points in $V$ such that a non-intersecting multi-path exists. In other words, we require that if $U$ and $V$ each have $k$ elements, then there does not exist a nontrivial permutation $\varphi$ in  the symmetric group on $k$ elements, $\mathcal S_k$, such that $\Pi_{U,V_\varphi}\ne\varnothing$ where $V_\varphi:=\bigl((c_{\varphi(i)},d_{\varphi(i)})\bigr)_{i\in\llbracket1,k\rrbracket}$. We denote  the pairs of $k$-tuples of ordered pairs satisfying this property as
    \be \label{def:Psi}
    \Psi :=\bigl\{(U,V)\mid U,V\subseteq \Z^2,\ |U| = |V|,\ \Pi_{U,V}\ne\varnothing,\ \text{ and } \Pi_{U,V_{\varphi}}=\varnothing \ \forall \varphi\in \mathcal S_{|U|} \setminus\{\Id\}\bigr\}.
    \ee
    As an example, if $U = \{(0,0),(2,0)\}$ and $V = \{(2,4),(4,4)\}$ (as depicted in Figure \ref{fig:multi-path}), then $(U,V) \in \Psi$. More generally, a pair $(U,V)$ lies in $\Psi$ if and only if $\Pi_{U,V} \neq \varnothing$, the points of $U$ lie along some down-right path, and the points of $V$ lie along a down-right path. See Figure \ref{fig:not_Psi} for an example of a pair $(U,V) \notin \Psi$.
    \begin{figure}
    \centering
     \begin{tikzpicture}[scale=1.5, every node/.style={circle, fill=black, inner sep=1.8pt}]

  \def\cols{4}  
  \def\rows{4}  

  \foreach \x in {0,...,\cols} {
    \foreach \y in {0,...,\rows} {
      \coordinate (P-\x-\y) at (\x, \y);
      \node at (P-\x-\y) {};

      \pgfmathtruncatemacro{\j}{\y}
      \ifnum\x=4
        \node[draw=none, fill=none, anchor=north west, font=\small, scale=0.925] 
          at ([shift={(0.15,0)}]P-\x-\y) {$(\x,\j)$};
      \else
        \node[draw=none, fill=none, anchor=north west, font=\small, scale=0.925] 
          at ([shift={(0.15,0)}]P-\x-\y) {$(\x,\j)$};
      \fi
    }
  }

  \foreach \x in {0,...,\numexpr\cols-1} {
    \foreach \y in {0,...,\rows} {
      \draw[->, line width=.8pt, gray!80] (P-\x-\y) -- (P-\the\numexpr\x+1\relax-\y); 
    }
  }

  \foreach \x in {0,...,\cols} {
    \foreach \y in {0,...,\numexpr\rows-1} {
      \draw[->, line width=.8pt, gray!80] (P-\x-\y) -- (P-\x-\the\numexpr\y+1\relax); 
    }
  }

  \draw[->, line width=1.5pt, blue] (P-0-0) -- (P-0-1); 
  \draw[->, line width=1.5pt, blue] (P-0-1) -- (P-0-2); 
  \draw[->, line width=1.5pt, blue] (P-0-2) -- (P-1-2); 
  \draw[->, line width=1.5pt, blue] (P-1-2) -- (P-2-2); 
  \draw[->, line width=1.5pt, blue] (P-2-2) -- (P-2-3); 
  \draw[->, line width=1.5pt, blue] (P-2-3) -- (P-2-4); 

  \draw[->, line width=1.5pt, red] (P-2-0) -- (P-2-1); 
  \draw[->, line width=1.5pt, red] (P-2-1) -- (P-3-1); 
  \draw[->, line width=1.5pt, red] (P-3-1) -- (P-4-1); 
  \draw[->, line width=1.5pt, red] (P-4-1) -- (P-4-2); 
  \draw[->, line width=1.5pt, red] (P-4-2) -- (P-4-3); 
  \draw[->, line width=1.5pt, red] (P-4-3) -- (P-4-4); 

  \tikzset{ellipsis/.style={inner sep=0pt, fill=none, draw=none, anchor=center, font=\normalsize}}

  \foreach \x in {0,...,\cols} {
    \node[ellipsis] at (\x, \rows + 0.4) {$\vdots$};
  }

  \foreach \x in {0,...,\cols} {
    \node[ellipsis] at (\x, -0.4) {$\vdots$};
  }

  \foreach \y in {4,3,2,1,0} {
    \node[ellipsis] at ({\cols + 0.5}, \y) {$\cdots$};
  }
  \foreach \y in {4,3,2,1,0} {
    \node[ellipsis] at ({-0.5}, \y) {$\cdots$};
  }

    \end{tikzpicture}
    \caption{Example multi-path in $\Z^2$ from $\{(0,0),(2,0)\}$ to $\{(2,4),(4,4)\}$ }
    \label{fig:multi-path}
\end{figure}
\begin{figure}
    \centering
     \begin{tikzpicture}[scale=1.5, every node/.style={circle, fill=black, inner sep=1.8pt}]

  \def\cols{4}
  \def\rows{4}

  \foreach \x in {0,...,\cols} {
    \foreach \y in {0,...,\rows} {
      \coordinate (P-\x-\y) at (\x, \y);
      \node at (P-\x-\y) {};
      \node[draw=none, fill=none, anchor=north west, font=\small, scale=0.925] 
          at ([shift={(0.15,0)}]P-\x-\y) {$(\x,\y)$};
    }
  }

  \foreach \x in {0,...,\numexpr\cols-1} {
    \foreach \y in {0,...,\rows} {
      \draw[->, line width=.8pt, gray!80] 
        (P-\x-\y) -- (P-\the\numexpr\x+1\relax-\y);
    }
  }

  \foreach \x in {0,...,\cols} {
    \foreach \y in {0,...,\numexpr\rows-1} {
      \draw[->, line width=.8pt, gray!80] 
        (P-\x-\y) -- (P-\x-\the\numexpr\y+1\relax);
    }
  }


  \draw[->, line width=1.6pt, red] (P-0-0) -- (P-0-4);
  \draw[->, line width=1.6pt, red] (P-0-4) -- (P-4-4);
  \draw[->, line width=1.6pt, red] (P-1-1) -- (P-1-3);
  \draw[->, line width=1.6pt, red] (P-1-3) -- (P-3-3);


  \draw[->, line width=2.5pt, blue] (0.05,0.05) -- (0.05,2.05);
  \draw[->, line width=2.5pt, blue] (P-0-2) -- (P-3-2);
  \draw[->, line width=2.5pt, blue] (P-3-2) -- (P-3-3);
  \draw[->, line width=2.5pt, blue] (P-1-1) -- (P-4-1);
  \draw[->, line width=2.5pt, blue] (P-4-1) -- (P-4-4);


  \node[circle, fill=green!70!black, draw=black, inner sep=3pt] at (P-0-0) {};
  \node[circle, fill=green!70!black, draw=black, inner sep=3pt] at (P-1-1) {};

  \node[circle, fill=orange, draw=black, inner sep=3pt] at (P-3-3) {};
  \node[circle, fill=orange, draw=black, inner sep=3pt] at (P-4-4) {};

  \node[draw=none, fill=none, above left=3pt] at (P-0-0) {$u_1$};
  \node[draw=none, fill=none, above left=3pt] at (P-1-1) {$u_2$};

  \node[draw=none, fill=none, above right=3pt] at (P-3-3) {$v_1$};
  \node[draw=none, fill=none, above right=3pt] at (P-4-4) {$v_2$};

  \tikzset{ellipsis/.style={inner sep=0pt, fill=none, draw=none, anchor=center, font=\normalsize}}

  \foreach \x in {0,...,\cols} {
    \node[ellipsis] at (\x, \rows + 0.4) {$\vdots$};
    \node[ellipsis] at (\x, -0.4) {$\vdots$};
  }

  \foreach \y in {4,3,2,1,0} {
    \node[ellipsis] at ({\cols + 0.5}, \y) {$\cdots$};
    \node[ellipsis] at ({-0.5}, \y) {$\cdots$};
  }

    \end{tikzpicture}
    \caption{Two multipaths in $\mathbb{Z}^2$ from $U:=\{u_1,u_2\}$ (green) to $V:=\{v_1,v_2\}$ (orange), where $(U,V)\notin \Psi$. To see this, note that there exists a multipath from $(u_1,u_2)$ to $(v_1,v_2)$ (denoted here in blue/thick) as well as a multipath from $(u_1,u_2)$ to $(v_2,v_1)$ (denoted here in red/thin).}
    \label{fig:not_Psi}
\end{figure}

\begin{definition} \label{def:polymer}
  For real-valued weights $\mbf X = \bigl(\mbf X_i\in \R^{\Z}\bigr)_{i \in \Z} = \bigl(X_{(i,j)}\in\R \mid i,j \in \Z\bigr)$ corresponding to the vertices of the directed lattice $\Z^2$, we define the polymer partition function with inverse temperature $\beta > 0$ between the initial point $(i,j)$ and end point $(n,m)$ in $\Z^ 2$ as
    \be\label{eq:point_to_point_partition_function}
    Z^{\beta,\mbf X}(n,m\mid i,j):=\sum_{\pi\in\Pi_{(i,j),(n,m)}}\prod_{(r,\ell)\in\pi}e^{\beta X_{(r,\ell)}}.
    \ee
    The partition function has a natural multi-path extension. For $(U,V)\in \Psi$, we define the \textit{multi-path partition function} between $U$ and $V$ to be
     \be\label{eq:multipath_partition_function}
    Z^{\beta,\mbf X}(V\mid U):=\sum_{\pi\in\Pi_{U, V}}\prod_{(r,\ell)\in\pi}e^{\beta X_{(r,\ell)}}.
    \ee
    We typically take $\beta = 1$, in which case we omit the superscript $\beta$ in the notation.
\end{definition}
\begin{remark}
    In Definition \ref{def:polymer}, we defined a polymer for arbitrary weights on $\Z^2$. In all results below (with the exception of Theorem \ref{thm:full_per_invariance}) we specialize to the case where the weights are periodic in the vertical direction. 
\end{remark}

\subsection{Invariance of periodic polymer partition functions}

Our next main result gives the invariance of the polymer partition functions under the iterated Pitman transform defined in \eqref{eq:pitman-transform}. For $\sigma \in \mathcal S_{\Z}$, we let $\mathcal W^\sigma$ be the set of tuples $(U,V) \in \Psi$, such that
\be \label{eq:Usigma_cond}
\begin{aligned}
\sigma(\Z_{< x}) &= \Z_{< x} \text{ for all }(x,y) \in U, \text{ and }\sigma(\Z_{> x}) = \Z_{> x} \text{ for all }(x,y) \in V.
\end{aligned}
\ee
See Figure \ref{fig:UV_path} for an example.
\begin{figure}
    \centering
    \includegraphics[height=6cm]{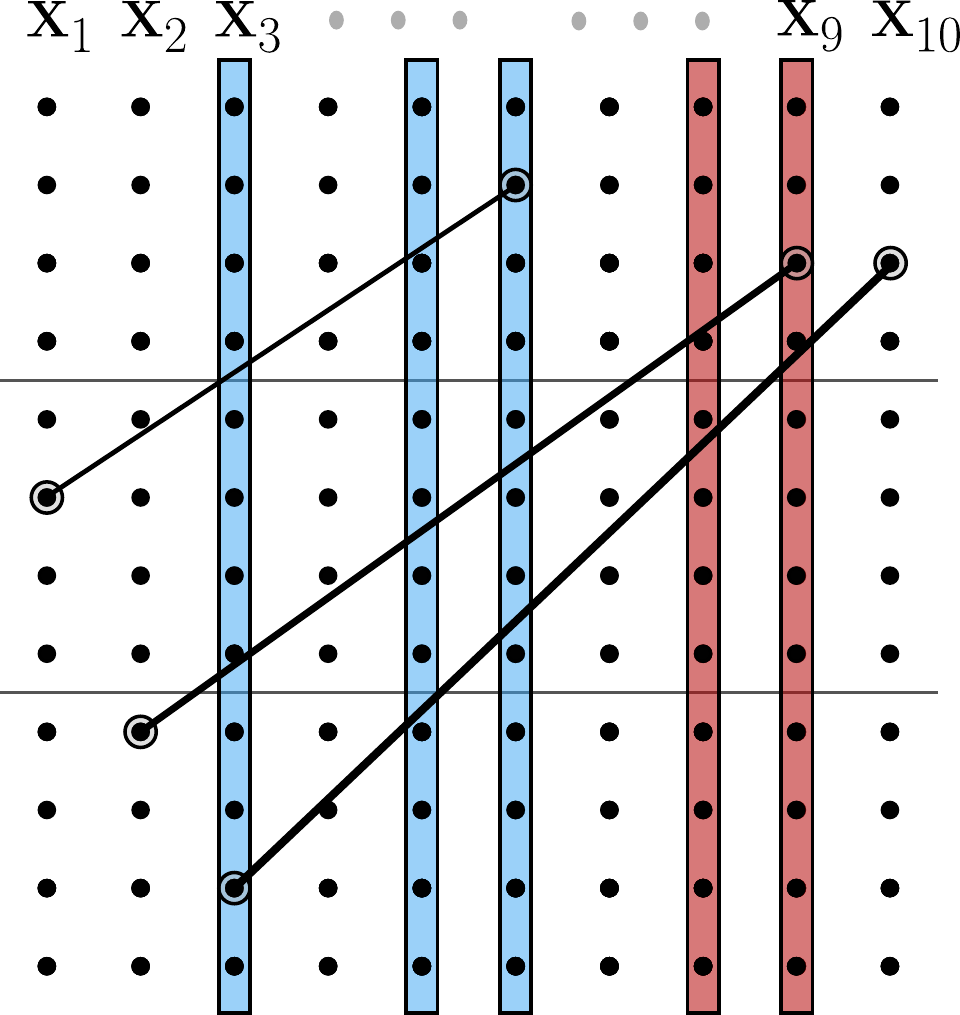}
    \caption{\small An example of an pair $(U,V) \in \mathcal W^\sigma$, where $U$ and $V$ each contain $3$ elements. Each column corresponds to a periodic vector of weights $\mbf X_k$. The gray horizontal lines are used to indicate a periodic environment with $N = 4$. In this figure, the column parameters of columns $3,5$, and $6$ (blue) are permuted, and the column parameters in columns $8$ and $9$ (red) are permuted.  The condition is that, for each of the three end points $(x,y)$ in $U$ (lower left), $\sigma(\Z_{<x}) = \Z_{<x}$, while for each of the three end points $(x,y)$ in $V$ (upper right), $\sigma(\Z_{>x}) = \Z_{>x}$. The multi-path partition function for this pair is preserved after changing the weights from $\mbf X$ to $\sigma \mbf X$, and the distributional equality in Theorem \ref{thm:per_permutation-invariance} holds jointly over all end point  pairs $(U,V) \in \Psi$ satisfying this property. }
    \label{fig:UV_path}
\end{figure}
\begin{theorem} \label{thm:polymer_invariance}
    Let $N \in \N$, and let $\mbf X=(\mbf X_k\in\R^{\ZN} \mid k\in \Z) = (X_{k,\ell}\mid k \in \Z, \ell \in \ZN)$ be a collection of real-valued weights corresponding to the columns of the lattice.  Extend these periodically to weights on $\Z \times \Z$ by the condition that $X_{k,j} = X_{k,\ell}$ whenever $j \equiv \ell \mod N$. Then, if $\sigma \in \mathcal S_\Z$ and $(U,V) \in \mathcal W^\sigma$,
    \be\label{eq:polymer_invariance}
    Z^{\Pcal_\sigma \mbf X}(V\mid U)=Z^{\mbf X}(V\mid U).
    \ee
\end{theorem}

Theorem \ref{thm:polymer_invariance} is proved in Section \ref{sec:polymer_invariance}. It is the first of its kind for a polymer in a periodic environment. The invariance under Pitman transformations in other contexts has been widely studied. The concept can be traced back to queuing theory from the works \cite{Weber1979,TsoWal1987}, where it was shown that the law of the departures process from two $\cdot/M/1$ queues in series is invariant under swapping of the service rates of the two queues. In the polymer context,  Corwin \cite{corwin2020invariancepolymerpartitionfunctions} used the work of Noumi and Yamada \cite{noumi2002tropicalrobinsonschenstedknuthcorrespondencebirational} to prove an analogous result to our Theorem \ref{thm:polymer_invariance} for a full-line version of the discrete Pitman transform. The proof of Theorem \ref{thm:polymer_invariance} likewise uses the machinery from \cite{noumi2002tropicalrobinsonschenstedknuthcorrespondencebirational}, but we instead work with $\Z \times \Z$ periodic matrices instead of finite-dimensional matrices. The matrix relations in Proposition \ref{prop:H_relation} that are key to proving Theorem \ref{thm:braid}\ref{it:braid} are also the key to the proof of Theorem \ref{thm:polymer_invariance}.   

Theorem \ref{thm:polymer_invariance} limits to a zero-temperature analogue in Theorem \ref{thm:LPP_invar}. This zero-temperature limit is the periodic analogue of invariances shown in \cite{DOV,Dauvergne-Nica-Virag-2022,dauvergne-virag-24,EEEJT}. The result in \cite{DOV} is particularly significant, as it was key in constructing the directed landscape--the central limiting object of the KPZ universality class. 

It would be interesting to see if Theorem \ref{thm:polymer_invariance} could be the starting point for constructing and proving convergence to the periodic directed landscape. However, we note that the action of this transform is qualitatively different than that in \cite{DOV}. Specifically, in the full-line case, the action of the iterated Pitman transforms and independent collection of Brownian motions and transforms them to the Brownian melon--a set of nonintersecting paths whose geodesic structure is more amenable to asymptotic analysis. By contrast, in this periodic setting when we take the weights to be i.i.d.\ with the inverse-gamma distribution, the transformed weights are equal in distribution to the original weights. This is the Burke property shown in \cite[Proposition 5.12]{corwin2024periodicpitmantransformsjointly}, and extended to an inhomogeneous setting in Proposition \ref{burke-pos-temp}. As a corollary, we obtain a permutation invariance result for the multi-path inhomogeneous inverse-gamma polymer below in Theorem \ref{thm:per_permutation-invariance}. This extends the previous result of \cite[Theorem 3.1]{EEEJT} to a positive-temperature and multi-path setting. In this sense, the periodic Pitman transform is more analogous to a limiting case of the full-line Pitman transform, where the initial point of a random walk is sent to $-\infty$ and the output of the transform is recentered at $0$ (see \cite{Dauvergne-Nica-Virag-2022,dauvergne-virag-24}). This limiting full-line setting is where we see analogous Burke properties, as in \cite{o2001brownian,oconnell02}.

\subsection{Permutation invariance}
In this section, we consider an inhomogeneous, periodic inverse-gamma polymer. Let $\lambda > 0$, and let and $\mbf a \in \R^\Z$ and $\mbf b \in \R^{\ZN}$ be sequences of real numbers such that $a_i + b_j > 0$ for $i \in \Z$ and $\jZ$. We take mutually independent weights $\mbf X^{\mbf a,\mbf b} = \bigl(X_{(i,j)}\mid  i \in \Z,\ \jZ\bigr)$ such that $X_{(i,j)} \sim \LIG(a_i + b_j,\lambda)$  for $i \in \Z$ and $\jZ$ (see Definition \ref{def:log-inv-gamma} below). Extend these periodically to weights on $\Z \times\Z$.  

\begin{theorem} \label{thm:per_permutation-invariance}
    For vectors $\mbf a \in \R^{\Z}$ and $\mbf b \in \R^{\ZN}$ satisfying $a_i + b_j > 0$ for $i \in \Z$ and $\jZ$, define periodic weights $\mbf X^{\mbf a,\mbf b}$ as above and extend them to $\Z \times \Z$. Then, for any finite permutation $\sigma \in \mathcal S_{\Z}$, we have 
    \[
    \Bigl(Z^{\mbf X^{\mbf a,\mbf b}}(V \mid U): (U,V) \in \mathcal W^{\sigma}\Bigr) \deq \Bigl(Z^{\mbf X^{\sigma\mbf a,\mbf b}}(V \mid U): (U,V) \in \mathcal W^{\sigma}\Bigr),
    \]
    where $\sigma \mbf a = (a_{\sigma(i)})_{i \in \Z}$. 
\end{theorem}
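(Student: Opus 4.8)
The plan is to realize the finite permutation $\sigma$ as an explicit composition of the elementary Pitman operators $\Pcal_k$ and then combine two facts: the \emph{deterministic} partition-function identity of Theorem \ref{thm:polymer_invariance} and the \emph{distributional} inhomogeneous Burke property of Proposition \ref{burke-pos-temp}. The mechanism that glues them together is that for every periodic weight array $\mbf W$ the collection $\bigl(Z^{\mbf W}(V\mid U)\bigr)_{(U,V)\in\mathcal U^\sigma}$ is a fixed measurable functional of $\mbf W$; hence an almost-sure identity among the partition functions of a randomly transformed array, combined with a distributional identity for the transformed array itself, yields a distributional identity among partition functions.

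First I would choose a decomposition $\sigma = s_{k_1}\cdots s_{k_m}$ into adjacent transpositions $s_k=(k,k+1)$ with the property that every index $k$ that appears satisfies $\sigma(\Z_{\le k})\ne\Z_{\le k}$. Such a word exists: let $\{p,\dots,q\}$ range over the maximal runs of integers $k$ with $\sigma(\Z_{\le k})\ne\Z_{\le k}$; then $\sigma$ restricts to a permutation of each surrounding interval $[p,q+1]$, which can be written using only $s_p,\dots,s_q$ (all of them such indices), and concatenating these words over all runs gives $\sigma$. With this choice, if $(U,V)\in\mathcal U^\sigma$ then the hypotheses of Theorem \ref{thm:polymer_invariance} hold for $(U,V)$ and this word simultaneously: a point $(x,y)\in U$ forces $\sigma(\Z_{<x})=\Z_{<x}$, hence $x-1$ is a cut and is not among the $k_j$, so $x\ne k_j+1$; a point $(x,y)\in V$ forces $\sigma(\Z_{\le x})=\Z_{\le x}$, hence $x$ is a cut and is not among the $k_j$. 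This translation between \eqref{eq:Usigma_cond} and the hypotheses of Theorem \ref{thm:polymer_invariance} is the content of Lemma \ref{lem:two_conditions_connect}. Consequently Theorem \ref{thm:polymer_invariance} gives, deterministically and for every periodic array $\mbf W$,
\[
Z^{\,\Pcal_{k_m}\cdots\Pcal_{k_1}\mbf W}(V\mid U)=Z^{\mbf W}(V\mid U),\qquad (U,V)\in\mathcal U^\sigma,
\]
and applying this pathwise with $\mbf W=\mbf W^{\mbf a,\mbf b}$ identifies the left-hand collection of the theorem with $\bigl(Z^{\,\Pcal_{k_m}\cdots\Pcal_{k_1}\mbf W^{\mbf a,\mbf b}}(V\mid U)\bigr)_{(U,V)\in\mathcal U^\sigma}$.

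Next I would iterate the Burke property to identify the law of the transformed array. By Proposition \ref{burke-pos-temp}, applying $\Pcal_k$ to an array of independent $\LIG$ weights with parameters $a_i+b_j$ produces an array of independent $\LIG$ weights with the column parameters $a_k$ and $a_{k+1}$ interchanged and all other parameters unchanged, and this is an identity of the full joint law; in particular the output again lies in the inhomogeneous inverse-gamma family, so the proposition can be reapplied. Iterating along the word $s_{k_1},\dots,s_{k_m}$ and tracking the relabelling — using $\mathcal U^\sigma=\mathcal U^{\sigma^{-1}}$, which follows from \eqref{eq:Usigma_cond} because $\sigma$ is a bijection, to absorb any inversion coming from the order of composition — yields $\Pcal_{k_m}\cdots\Pcal_{k_1}\,\mbf W^{\mbf a,\mbf b}\deq\mbf W^{\sigma\mbf a,\mbf b}$. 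Since $(U,V)\mapsto Z^{\,\cdot}(V\mid U)$ is a measurable functional of the array, applying it to both sides and combining with the previous display gives
\[
\bigl(Z^{\mbf W^{\mbf a,\mbf b}}(V\mid U)\bigr)_{(U,V)\in\mathcal U^\sigma}=\bigl(Z^{\,\Pcal_{k_m}\cdots\Pcal_{k_1}\mbf W^{\mbf a,\mbf b}}(V\mid U)\bigr)_{(U,V)\in\mathcal U^\sigma}\deq\bigl(Z^{\mbf W^{\sigma\mbf a,\mbf b}}(V\mid U)\bigr)_{(U,V)\in\mathcal U^\sigma},
\]
which is the claim.

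The substantive analytic input is Proposition \ref{burke-pos-temp} itself, which is established separately; granting it and Lemma \ref{lem:two_conditions_connect}, the remaining steps are formal. The one point requiring care is the \emph{simultaneous} combinatorial matching: one must exhibit a single word for $\sigma$ whose transpositions avoid the forbidden indices uniformly over all $(U,V)\in\mathcal U^\sigma$, and then verify that this same word drives the Burke iteration to the parameter vector $\sigma\mbf a$ rather than $\sigma^{-1}\mbf a$. Both reduce to the interval-decomposition of $\sigma$ described above, so I expect this bookkeeping, not any probabilistic estimate, to be the only real obstacle in assembling the proof.
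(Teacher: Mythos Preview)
Your proposal is correct and follows essentially the same route as the paper: choose a word in adjacent transpositions for $\sigma$ whose letters avoid the ``cut'' indices, apply Theorem \ref{thm:polymer_invariance} for the deterministic equality of partition functions, and iterate Proposition \ref{burke-pos-temp} to identify the law of the transformed array with $\mbf W^{\sigma\mbf a,\mbf b}$. The only cosmetic difference is that the paper produces the word by taking a minimal-length decomposition and invoking Lemma \ref{lem:two_conditions_connect}, whereas you build it directly from the interval decomposition of $\sigma$; your explicit handling of the $\sigma$ versus $\sigma^{-1}$ bookkeeping via $\mathcal U^{\sigma}=\mathcal U^{\sigma^{-1}}$ is a point the paper leaves implicit.
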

The condition \eqref{eq:Usigma_cond} ensures that, between the pairs of points in $(U,V)$, the column rates are either permuted among themselves or are left unchanged. See Figure \ref{fig:UV_path}.

We can also permute the $\mbf b$ parameters as well, with doubly periodic weights $(X_{(i,j)} \mid i \in \Z_M,\jZ)$ for $M,N \in \N$, when we restrict to points that lie inside the box $\llbracket 0,M \rrbracket \times \llbracket 0,N \rrbracket$. In this case, the model isn't actually periodic at all; it is simply a restriction of the full-space polymer to a finite grid. For finite permutations $\sigma,\tau\in \mathcal S_{\Z}$, define the set $\mathcal W^{\sigma,\tau}$ to be the set of tuples $(U,V)$, where $U,V$ are each sets of end point pairs in $\Z \times \Z$ satisfying the condition \eqref{eq:Usigma_cond} for $\sigma$ as well as the condition
\be \label{eq:Usigma_tau_cond}
\begin{aligned}
\tau(\Z_{< y}) = \Z_{< y} \text{ for all }(x,y) \in U \text{ and } \tau(\Z_{> y}) = \Z_{> y} \text{ for all }(x,y) \in V.
\end{aligned}
\ee
The following is an extension of the recent result \cite[Theorem 3.1]{EEEJT} to multi-paths and to the positive-temperature case. A sketch of the proof for the extension to multi-paths in zero-+temperature was previously discussed in \cite[Remark 4.3]{EEEJT}. The technique follows the same outline as developed in \cite{EEEJT}, as we use a coupling of the weights given by applying a Pitman transform to neighboring rows and columns. However, the coupling we use in this proof is different because we treat the weights as periodic outside a large box and apply the periodic Pitman transform. The algebra involved in this positive-temperature case is also different.
\begin{theorem} \label{thm:full_per_invariance}
    For vectors $\mbf a \in \R^{\Z}$ and $\mbf b \in \R^{\Z}$ satisfying $a_i + b_j > 0$ for $i,j \in \Z$ define independent weights $\mbf X^{\mbf a,\mbf b} = \bigl(X_{(i,j)}: i,j \in \Z\bigr)$ such that $X_{(i,j)} \sim \LIG(a_i + b_j,\lambda)$ for all $i,j$. Then, for any finite permutations $\sigma,\tau \in \mathcal S_{\Z}$, we have 
    \[
    \Bigl(Z^{\mbf X^{\mbf a,\mbf b}}(V \mid U): (U,V) \in \mathcal W^{\sigma,\tau}\Bigr) \deq \Bigl(Z^{\mbf X^{\sigma\mbf a,\tau \mbf b}}(V \mid U): (U,V) \in \mathcal W^{\sigma,\tau}\Bigr),
    \]
    where $\sigma \mbf a = (a_{\sigma(i)})_{i \in \Z}$ and $\tau \mbf b = (b_{\tau(j)})_{j \in \Z}$. 
\end{theorem}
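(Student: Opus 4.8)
The plan is to derive this from Theorem \ref{thm:per_permutation-invariance} in two moves: first permute the column parameters $\mbf a$, then transpose the lattice and permute $\mbf b$ the same way, and in each move bridge from the genuinely periodic model of Theorem \ref{thm:per_permutation-invariance} to the full-space model by embedding into a periodic environment on a large box. For the first reduction: since two processes indexed by $\mathcal U^{\sigma,\tau}$ have the same law iff all their finite-dimensional marginals agree, it suffices to fix a finite subcollection $\mathcal F\subseteq\mathcal U^{\sigma,\tau}$ and establish the joint equality over $\mathcal F$. Choosing a box $B=\llbracket i_0,i_1\rrbracket\times\llbracket j_0,j_1\rrbracket$ containing every point of every pair in $\mathcal F$ and an integer $N>j_1-j_0$, I would note that because up-right paths are monotone in each coordinate, every path joining the two endpoints of any pair in $\mathcal F$ stays in $B$, so the relevant multi-path partition functions depend only on the weights in $B$; consequently, replacing $\mbf b$ by the $N$-periodic sequence $\widetilde{\mbf b}\in\R^{\ZN}$ that agrees with $\mbf b$ on $\llbracket j_0,j_0+N-1\rrbracket\supseteq\llbracket j_0,j_1\rrbracket$ changes nothing for these partition functions. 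The positivity hypothesis $a_i+b_j>0$ for all $i,j\in\Z$ is inherited for free by $(\mbf a,\widetilde{\mbf b})$, so Theorem \ref{thm:per_permutation-invariance} applies to the periodic polymer with parameters $(\mbf a,\widetilde{\mbf b})$ and the finite permutation $\sigma$; restricting its conclusion from $\mathcal U^\sigma$ to $\mathcal F\subseteq\mathcal U^{\sigma,\tau}\subseteq\mathcal U^\sigma$, and using that the periodic partition functions coincide with the full-space ones on $\mathcal F$, gives
\[
\bigl(Z^{\mbf W^{\mbf a,\mbf b}}(V\mid U):(U,V)\in\mathcal F\bigr)\deq\bigl(Z^{\mbf W^{\sigma\mbf a,\mbf b}}(V\mid U):(U,V)\in\mathcal F\bigr).
\]
As $\mathcal F$ was arbitrary, this holds jointly over all of $\mathcal U^{\sigma,\tau}$. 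In effect the argument proves a full-space version of Theorem \ref{thm:per_permutation-invariance}: for $\mbf a,\mbf b\in\R^{\Z}$ with $a_i+b_j>0$ everywhere and any finite permutation of the column index, the family of multi-path partition functions indexed by pairs satisfying \eqref{eq:Usigma_cond} is invariant.

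The second move handles the $\mbf b$-permutation by transposition. The map $(i,j)\mapsto(j,i)$ is a weight-preserving bijection of up-right non-intersecting multi-paths and sends an i.i.d.\ array with law $\mbf W^{\mbf c,\mbf d}$ to one with law $\mbf W^{\mbf d,\mbf c}$, so $\bigl(Z^{\mbf W^{\mbf c,\mbf d}}(V\mid U)\bigr)$ and $\bigl(Z^{\mbf W^{\mbf d,\mbf c}}(V^{\top}\mid U^{\top})\bigr)$ agree jointly in law, where $U^{\top},V^{\top}$ are the transposed endpoint sets. Condition \eqref{eq:Usigma_tau_cond} is exactly the statement that whenever $(U,V)\in\mathcal U^{\sigma,\tau}$, the transposed pair $(U^{\top},V^{\top})$ satisfies \eqref{eq:Usigma_cond} with $\tau$ in place of $\sigma$. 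Applying the full-space version of Theorem \ref{thm:per_permutation-invariance} from the previous paragraph to the polymer $\mbf W^{\mbf b,\sigma\mbf a}$ (whose column parameters are $\mbf b$; positivity holds since $a_{\sigma(i)}+b_j>0$ for all $i,j$) with the finite permutation $\tau$, and transposing back, yields
\[
\bigl(Z^{\mbf W^{\sigma\mbf a,\mbf b}}(V\mid U):(U,V)\in\mathcal U^{\sigma,\tau}\bigr)\deq\bigl(Z^{\mbf W^{\sigma\mbf a,\tau\mbf b}}(V\mid U):(U,V)\in\mathcal U^{\sigma,\tau}\bigr).
\]
Chaining this with the display of the first move proves the theorem.

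I expect the only real obstacle to lie in the periodic embedding: one must verify that the period $N$ (and its analogue in the transposed step) can be chosen large enough relative to any finite subcollection, that periodizing $\mbf b$ (resp.\ $\sigma\mbf a$) preserves the positivity constraint — which is automatic from the full-space hypothesis — and, most importantly, that the two defining conditions \eqref{eq:Usigma_cond}--\eqref{eq:Usigma_tau_cond} of $\mathcal U^{\sigma,\tau}$ line up with the index-set hypotheses of Theorem \ref{thm:per_permutation-invariance} in both the original and the transposed orientation; this last matching is really the content of \eqref{eq:Usigma_tau_cond}. The two permutation moves do not interfere because $\sigma$ acts only on columns while $\tau$ acts only on rows, and $\mathcal U^{\sigma,\tau}$ was built precisely as the set of endpoint pairs admissible for both; everything else is routine bookkeeping. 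Alternatively, one could unfold the proof of Theorem \ref{thm:per_permutation-invariance} directly inside the box, applying the periodic Pitman transform to neighboring columns for $\sigma$ and to neighboring rows for $\tau$, but the reduction above avoids repeating that machinery.
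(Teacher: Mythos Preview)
Your proposal is correct and rests on the same core idea as the paper: reduce to finitely many endpoint pairs, embed into a periodic environment on a box large enough to contain all relevant paths, invoke the periodic machinery, and let the box exhaust $\Z^2$. The organizational difference is that the paper periodizes both coordinates at once---working in the doubly periodic grid $\llbracket -M,M\rrbracket^2$ with period $N=2M+1$ and applying Theorem \ref{thm:polymer_invariance} and Proposition \ref{burke-pos-temp} directly to both rows and columns---whereas you factor the argument into two passes: first columns via a black-box appeal to Theorem \ref{thm:per_permutation-invariance}, then rows via transposition and a second appeal to the full-space column invariance just established. Your route buys a clean reuse of Theorem \ref{thm:per_permutation-invariance} and makes the row/column symmetry explicit through the reflection $(i,j)\mapsto(j,i)$, at the mild cost of checking that transposition carries $\mathcal U^{\sigma,\tau}$ into the admissible set for $\tau$ (which, as you note, is exactly the content of \eqref{eq:Usigma_tau_cond}) and preserves membership in $\Psi$; the paper's route is terser but essentially reruns the proof of Theorem \ref{thm:per_permutation-invariance} in a doubly periodic setting. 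You correctly identify the paper's alternative in your closing sentence.
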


Theorems \ref{thm:per_permutation-invariance} and \ref{thm:full_per_invariance} are proved in Section \ref{sec:Burke_permutation}. When the initial point is fixed, invariances of this flavor can be deduced from the symmetry of Schur functions and Whittaker functions (see, for example, \cite{Boro-Peche-08,COSZ-2014,OSZ-2014,Borodin-Corwin-2014}). A similar type of invariance for last-passage percolation was discovered in the work of Dauvergne \cite[Theorem 1.5]{Dauvergne-2022}, motivated by a shift-invariance result previously proved by Borodin, Gorin, and Wheeler \cite{Borodin-Gorin-Wheeler-2022}. Connections between these results are discussed further in \cite[Section 3]{EEEJT}.

\subsection{Zero-temperature limits} \label{sec:zero_temp_intro}

We define the \textit{zero-temperature discrete periodic Pitman transform} as  the map $\zP:\R^{\ZN} \times \R^{\ZN} \to \R^{\ZN} \times \R^{\ZN} $ defined by
\be \label{eq:zero_temp_Pitman}
\begin{aligned}
\zP(\mbf X_1,\mbf X_2) &:= \bigl(\zT(\mbf X_1,\mbf X_2),\zD(\mbf X_1,\mbf X_2)\bigr),\quad\text{where} \\
\zT_i(\mbf X_1,\mbf X_2) &:= X_{2,i-1} + \max_{\jZ}Y_{[i-1,j]} -\max_{\jZ}Y_{[i,j]}    ,\quad\text{and} \\
\zD_i(\mbf X_1,\mbf X_2) &:= X_{1,i+1} + \max_{\jZ}Y_{(i,j]} - \max_{\jZ} Y_{(i-1,j]}, \quad\text{where}\quad Y_\ell = X_{1,\ell+1} - X_{2,\ell}.
\end{aligned}
\ee
The mapping $\zD$ is similar but not equivalent to a mapping used to describe the invariant measures for multi-type TASEP by Ferrari and Martin \cite{ferrari2007stationary}. See \cite[Appendix F]{corwin2024periodicpitmantransformsjointly} for more on this connection. One immediately sees that we can obtain $\zP$ as a limiting case of the positive-temperature discrete periodic Pitman transform: 
\[
\zP(\mbf X_1,\mbf X_2) = \lim_{\beta \to \infty} \f{1}{\beta} \Pcal(\beta \mbf X_1,\beta \mbf X_2).
\]

\begin{definition}
For real-valued weights $\mbf X = \bigl(X_{(i,j)}\in\R \mid i,j \in \Z\bigr)$ corresponding to the vertices of the directed lattice $\Z^2$, we define the last-passage percolation (LPP) time for both a single pair of points and multi-paths analogously as in \eqref{eq:point_to_point_partition_function}-\eqref{eq:multipath_partition_function}: 
\[
G^{\mbf X}(V \mid U) = \max_{\pi \in \Pi_{U,V}} \sum_{(r,\ell) \in \pi} X_{(r,\ell)}.
\]
\end{definition}

For $k \in \Z$, we can also define the map $\zP_k: (\R^{\ZN})^\Z \to (\R^{\ZN})^\Z $ by \eqref{eq:Pi_def}, with $T,D$ replaced by $\zT,\zD$.

With this notation defined, we get the following zero-temperature analogues of our main theorems.

\begin{theorem} \label{thm:zer_temp_braid}
     The same relations in Theorem \ref{thm:braid} hold for the maps $\zP_k$. 
\end{theorem}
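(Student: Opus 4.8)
The plan is to deduce Theorem \ref{thm:zer_temp_braid} from Theorem \ref{thm:braid} by a zero-temperature limiting argument, rather than repeating the matrix-relation machinery in the tropical setting. This is natural given the remark right after the definition of $\zP$: the identity $\zP(\mbf W_1,\mbf W_2) = \lim_{\beta\to\infty}\beta^{-1}\Pcal(\beta\mbf W_1,\beta\mbf W_2)$, together with the promised uniform convergence (Lemma \ref{lem:unif-convergence}), should let us pass the braid relation $\Pcal_k\Pcal_{k+1}\Pcal_k = \Pcal_{k+1}\Pcal_k\Pcal_{k+1}$ through the limit.

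Concretely, first I would record the scaling behavior of the operators: writing $S_\beta$ for the map $\mbf W \mapsto \beta \mbf W$ applied coordinatewise to each vector in the bi-infinite sequence, one checks directly from \eqref{eq:pitman-transform} and \eqref{eq:zero_temp_Pitman} that $\beta^{-1}\,S_\beta^{-1}\circ \Pcal_k \circ S_\beta \to \zP_k$ pointwise on $(\R^{\ZN})^\Z$ as $\beta\to\infty$, with the convergence uniform on compact sets of the two relevant input vectors (this is exactly the content of Lemma \ref{lem:unif-convergence}, upgraded from the single-pair map $\Pcal$ to the operator $\Pcal_k$, which only touches two coordinates). Second, I would fix an input sequence $\mbf W \in (\R^{\ZN})^\Z$, apply the $\beta$-scaled version of the established braid identity for $\Pcal$,
\[
\beta^{-1} S_\beta^{-1}\Pcal_k\Pcal_{k+1}\Pcal_k S_\beta \mbf W \;=\; \beta^{-1} S_\beta^{-1}\Pcal_{k+1}\Pcal_k\Pcal_{k+1} S_\beta \mbf W,
\]
and let $\beta\to\infty$ on both sides. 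The main point is that a triple composition of maps each converging uniformly on compacts (and with locally bounded images, so that the compact-set hypothesis propagates through the composition) converges to the composition of the limits; hence the left side tends to $\zP_k\zP_{k+1}\zP_k\mbf W$ and the right side to $\zP_{k+1}\zP_k\zP_{k+1}\mbf W$, giving the desired equality at $\mbf W$. Since $\mbf W$ is arbitrary, Theorem \ref{thm:zer_temp_braid} follows.

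Alternatively — and this may be cleaner to write — I would mirror the proof of Theorem \ref{thm:braid} verbatim at zero temperature: the proof of the positive-temperature braid relation reduces to three scalar identities \eqref{eq:braid-1}--\eqref{eq:braid-3}, with the third already known from \cite[Proposition 5.13]{corwin2024periodicpitmantransformsjointly} (whose zero-temperature analogue, via the TASEP/Ferrari--Martin connection in that paper, is also available), and the other two obtained by iterating the matrix relation of Corollary \ref{cor:H_relation} and interpreting products as partition functions via Lemma \ref{lem:sum-partition-inv}. In the zero-temperature world, ``$\sum$'' becomes ``$\max$'' and the matrix products become $(\max,+)$ matrix products; Theorem \ref{thm:LPP_invar} plays the role of Theorem \ref{thm:polymer_invariance}, and one needs the $(\max,+)$ analogue of Lemma \ref{lem:sum-partition-inv}. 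Both routes work; I would lead with the limiting argument and remark that the direct tropical argument is identical in structure.

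The main obstacle is the interchange of limit and composition in the first approach: uniform convergence of $f_\beta \to f$ and $g_\beta \to g$ does not by itself give $f_\beta \circ g_\beta \to f\circ g$ unless one controls the images — specifically, one needs that $g_\beta(\mbf W)$ stays in a fixed compact set (independent of large $\beta$) on which $f_\beta \to f$ uniformly, and one needs $f$ continuous there. This is true here because each rescaled operator $\beta^{-1}S_\beta^{-1}\Pcal_k S_\beta$ is built from differences of log-sum-exp expressions that are globally Lipschitz in the input with constant independent of $\beta$ (each $\log\sum e^{Y_{[i,j]}}$ has gradient in the probability simplex), so the whole triple composition is equi-Lipschitz on compacts and maps compacts into compacts uniformly in $\beta$; I would state this as a short lemma (or fold it into the strengthening of Lemma \ref{lem:unif-convergence}) and then the limit interchange is routine. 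If instead one takes the direct tropical route, the only real work is checking that the $(\max,+)$ matrix identities hold with the same combinatorics — which follows because every step in the semiring-algebra proof of Theorem \ref{thm:braid} uses only the distributive and associative laws shared by $(+,\times)$ on $\R_{>0}$ and $(\max,+)$ on $\R$.
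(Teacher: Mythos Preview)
Your first approach---deducing the zero-temperature braid relation by scaling the positive-temperature identity and passing to the limit---is exactly what the paper does: it introduces $\Pcal_k^\beta \mbf W := \beta^{-1}\Pcal_k(\beta\mbf W)$, observes from Theorem \ref{thm:braid} that $\Pcal_k^\beta\Pcal_{k+1}^\beta\Pcal_k^\beta = \Pcal_{k+1}^\beta\Pcal_k^\beta\Pcal_{k+1}^\beta$, and sends $\beta\to\infty$ using the uniform convergence $\Pcal_k^\beta \to \zP_k$ established in the proof of Theorem \ref{thm:LPP_invar}. Your caution about interchanging limits with composition is well placed, but note that Lemma \ref{lem:unif-convergence} actually gives the \emph{global} bound $|\beta^{-1}\log\sum_i e^{\beta E_i} - \max_i E_i| \le \beta^{-1}\log N$ independent of the inputs, so the equi-Lipschitz/compactness bookkeeping you sketch is not needed---global uniform convergence together with continuity of each $\zP_k$ already lets the triple composition pass to the limit.
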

Theorem \ref{thm:zer_temp_braid} similarly allows us to define maps $\zP_\sigma$ for a general finite permutation $\sigma \in \mathcal S_\Z$. 

\begin{theorem} \label{thm:LPP_invar}
    The same invariance in Theorem \ref{thm:polymer_invariance} holds when $Z^{\mbf X}$ is replaced by $G^{\mbf X}$, and the maps $\Pcal_{\sigma}$ are replaced by $\zP_{\sigma}$.
\end{theorem}

\begin{theorem} \label{thm:zer_temp_perm_invar}
    The analogues of Theorems \ref{thm:per_permutation-invariance} and \ref{thm:full_per_invariance}  hold, when $Z^{\mbf X}$ is replaced by $G^{\mbf X}$ and, instead of log-inverse-gamma weights, the weights satisfy either $X_{(i,j)} \sim \Geom(q_ir_j)$ for all $i,j$, where $q_ir_j \in (0,1)$, or $X_{(i,j)} \sim \Exp(a_i + b_j)$ for all $i,j$, where $a_i + b_j > 0$  (see Definitions \ref{def:geom}-\ref{def:exp} below).
\end{theorem}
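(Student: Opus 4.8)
The plan is to obtain the zero-temperature statement as a limit of the positive-temperature Theorems \ref{thm:per_permutation-invariance} and \ref{thm:full_per_invariance}, rather than redoing the Burke/permutation argument from scratch. First I would recall the distributional convergences relating the two temperature regimes: if $W^{\beta}_{(i,j)} \sim \LIG(\beta(a_i+b_j),\lambda)$ then $\beta^{-1} W^{\beta}_{(i,j)}$ converges in distribution (indeed almost surely under a suitable coupling, e.g.\ via a fixed uniform) to an $\Exp(a_i+b_j)$ random variable, and a parallel statement sends geometric weights with parameter depending on $\beta$ to exponential weights; similarly there is a $q\to 1$ degeneration of the geometric distribution to the exponential. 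Next, observe that $G^{\mbf W}(V\mid U) = \lim_{\beta\to\infty}\beta^{-1}\log Z^{\beta,\mbf W}(V\mid U)$ pointwise in the weights (this is just the standard $\max$ vs.\ $\log\sum\exp$ comparison, uniform on compacts and valid since $\Pi_{U,V}$ is finite once $U,V$ are fixed), and that $\beta^{-1}\log Z^{\beta,\beta\mbf W}(V\mid U) = \beta^{-1}\log Z^{1,\mbf W}(V\mid U)\cdot$— more precisely $Z^{\beta,\mbf W} = Z^{1,\beta\mbf W}$, so scaling inverse temperature is the same as scaling the weights. Combining these two facts: the left and right families in Theorem \ref{thm:per_permutation-invariance}, with parameters $\beta\mbf a,\mbf b$ and $\beta(\sigma\mbf a),\mbf b$ respectively and weights $\LIG(\beta(a_i+b_j),\lambda)$, after multiplying by $\beta^{-1}$ and sending $\beta\to\infty$, converge jointly in distribution to the LPP families $\bigl(G^{\mbf W^{\mbf a,\mbf b}}(V\mid U)\bigr)$ and $\bigl(G^{\mbf W^{\sigma\mbf a,\mbf b}}(V\mid U)\bigr)$ with $\Exp(a_i+b_j)$ weights, and the distributional equality passes to the limit. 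The geometric case is handled the same way using the $q\to 1$ scaling (set $q_i = e^{-\epsilon a_i}$, $r_j = e^{-\epsilon b_j}$ and send $\epsilon\to 0$, using that $\epsilon^{-1}\Geom(q_ir_j)\to\Exp(a_i+b_j)$).

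For the joint convergence in distribution I would work on a single probability space: realize all the $\LIG$ (or $\Geom$) weights for every $\beta$ (resp.\ $q$) as deterministic monotone functions of one family of independent uniform random variables, so that $\beta^{-1}W^{\beta}_{(i,j)} \to W_{(i,j)}^{\Exp}$ almost surely and simultaneously for all $(i,j)$ in the (finite, once we fix which pairs $(U,V)$ we look at) index set. Since each $G$ and each $\beta^{-1}\log Z^{\beta}$ is a continuous — in fact piecewise-linear and piecewise-real-analytic — function of finitely many weights, and $\beta^{-1}\log\sum\to\max$ locally uniformly, the whole finite vector of rescaled log-partition functions converges a.s.\ to the corresponding vector of LPP values; a.s.\ convergence implies convergence in distribution, and distributional equality of the two sides for every $\beta$ therefore forces distributional equality of the two limiting sides. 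A mild point: Theorems \ref{thm:per_permutation-invariance} and \ref{thm:full_per_invariance} are stated jointly over the infinite index set $\mathcal U^\sigma$ (resp.\ $\mathcal U^{\sigma,\tau}$); to conclude the joint statement over that whole infinite family it suffices, by Kolmogorov's extension / the fact that a probability measure on $\R^{\mathcal U^\sigma}$ is determined by its finite-dimensional marginals, to prove the equality for every finite subcollection, which is exactly the finite-weight situation above.

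I should also double-check that the hypotheses match: the condition $a_i+b_j>0$ in Theorem \ref{thm:zer_temp_perm_invar} guarantees the $\Exp(a_i+b_j)$ weights are well defined, and $q_ir_j\in(0,1)$ guarantees the $\Geom$ weights are well defined; these are preserved under the permutations $\sigma$ (and $\tau$), so both sides of each identity are simultaneously well defined, and the $\beta$-dependent parameters $\beta(a_i+b_j)$ remain positive for all $\beta>0$, so the approximating positive-temperature models are legitimate. For the full-line statement (analogue of Theorem \ref{thm:full_per_invariance}) the argument is identical since that theorem already lives on a finite box $\llbracket 0,M\rrbracket\times\llbracket 0,N\rrbracket$, so there is no extra issue with the vertical direction.

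The main obstacle I anticipate is not conceptual but bookkeeping: one must make sure the convergence $\beta^{-1}W^{\beta}_{(i,j)}\to W_{(i,j)}^{\Exp}$ and the convergence $\beta^{-1}\log Z^{\beta}\to G$ can be run \emph{simultaneously} (so that one genuinely gets a.s.\ convergence of the composed quantity rather than just convergence of each factor separately), and that the scaling of parameters on the two sides of the identity is matched correctly — i.e.\ that rescaling inverse temperature by $\beta$ in the $\LIG(\beta(a_i+b_j),\lambda)$ model is exactly what turns Theorem \ref{thm:per_permutation-invariance} into its zero-temperature shadow. Once the coupling is set up, each individual limit is routine.
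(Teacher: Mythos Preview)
Your limiting approach has two genuine gaps. First, the claimed convergence $\beta^{-1}W^{\beta}_{(i,j)}\to\Exp(a_i+b_j)$ for $W^{\beta}_{(i,j)}\sim\LIG(\beta(a_i+b_j),\lambda)$ is false: writing $W^\beta=-\log G_\beta$ with $G_\beta\sim\text{Gamma}(\beta(a_i+b_j),\lambda)$, the law of large numbers gives $G_\beta/\beta\to(a_i+b_j)/\lambda$, hence $W^\beta/\beta\to 0$, not an exponential. The correct degeneration sends the shape parameter to \emph{zero}: if $W^\epsilon\sim\LIG(\epsilon(a_i+b_j),\lambda)$ then $\epsilon W^\epsilon\to\Exp(a_i+b_j)$, and then $\epsilon\log Z^{1,\mbf W^\epsilon}=\epsilon\log Z^{1/\epsilon,\,\epsilon\mbf W^\epsilon}\to G$ as $\epsilon\to 0$. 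So the exponential case can be salvaged, but with the opposite scaling direction from what you wrote. Second, and more seriously, your treatment of the geometric case does not prove the geometric case at all: you describe the limit $\epsilon\,\Geom(q_ir_j)\to\Exp(a_i+b_j)$, which produces the exponential statement \emph{from} the geometric one, presupposing exactly what you need to show. There is no scaling limit of the continuous $\LIG$ distribution that yields the discrete geometric distribution, so the geometric case cannot be obtained as a zero-temperature limit of Theorem~\ref{thm:per_permutation-invariance}.

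The paper therefore proceeds differently: it does not take a distributional limit from positive temperature, but instead re-runs the proof of Theorems~\ref{thm:per_permutation-invariance} and~\ref{thm:full_per_invariance} in zero temperature, replacing Theorem~\ref{thm:polymer_invariance} by its zero-temperature analogue Theorem~\ref{thm:LPP_invar} and replacing the $\LIG$ Burke property (Proposition~\ref{burke-pos-temp}) by zero-temperature Burke properties proved directly for geometric weights (Proposition~\ref{burke_geom}, via Lemmas~\ref{lem:DT_ordering}\ref{sum-zTi}--\ref{sum-zDi} and~\ref{lem:DT_identities}\ref{sum-zDi-zTi} together with the bijection of Lemma~\ref{W-bijection}) and for exponential weights (Proposition~\ref{burke_exp}, obtained as a scaling limit of the geometric Burke property). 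The geometric Burke property is thus the primitive ingredient, and the exponential case is derived from it; this is the reverse of the order you attempted.
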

The details of the limit transitions for the proofs of these theorems are given in Appendix \ref{appx:zero_temp}.

\subsection{Organization of the paper} Section \ref{sec:algebra} develops the algebraic structure of the periodic Pitman transform, culminating in the proof of Theorem \ref{thm:braid} in Section \ref{subsec:braid}.
Section \ref{sec:polymer_invariance} is focused on proving the partition function invariance in Theorem \ref{thm:polymer_invariance}. Section \ref{sec:Burke_permutation} proves  an inhomogeneous Burke property for the periodic Pitman transform and uses it to prove Theorems \ref{thm:per_permutation-invariance} and \ref{thm:full_per_invariance}.  Appendix \ref{sec:DT-basic-properties} contains some prior results from the work \cite{corwin2024periodicpitmantransformsjointly} and some auxiliary technical results. The necessary details for the zero-temperature analogues in Theorems \ref{thm:LPP_invar}-\ref{thm:zer_temp_perm_invar} are proved in Appendix \ref{appx:zero_temp}. 

\subsection{Notation and conventions}
 We use $\deq$ to indicate that two random variables or stochastic processes are equal in distribution. Below, we define our notation for geometric, exponential, and log-inverse-gamma random variables.

\begin{definition}[Geometric Random Variable] \label{def:geom}
A random variable $X \in \Z_{\geq 1}$ has a geometric distribution with parameter $q \in [0,1]$ if for all $n \in \Z_{\geq 0}$, we have that 
$$\Prob(X=n) = q^n(1-q)$$
In this case, we write $X \sim \Geom(q)$.
\end{definition}

\begin{definition}[Exponential Random Variable] \label{def:exp}
A random variable $X \in \R$ has an exponential distribution with rate $\alpha>0$ if
\begin{align*}
\Prob(X \in dx) = 
\begin{cases} 
\alpha e^{-\alpha x} &x \geq 0 \\
0 &x < 0
\end{cases}
\end{align*}
In this case, we write $X \sim \Exp(\alpha)$.
\end{definition}

\begin{definition}[Log-Inverse-Gamma Random Variable]\label{def:log-inv-gamma}
A random variable $X \in \R$ has a log-inverse-gamma distribution with shape $\gamma>0$ and scale $\lambda > 0$ if 
$$\Prob(X \in dx) = \frac{\lambda^{\gamma}}{\Gamma(\gamma)} e^{-\gamma x}e^{-\lambda e^{-x}}$$
In this case, we write $X \sim \LIG(\gamma,\lambda)$.
\end{definition}

\subsection{Acknowledgements}
This work was completed during the Columbia Math Summer Undergraduate Research Program in Summer 2025, and we acknowledge support for this program from the Columbia University math department. We thank George Dragomir for organizing this program. During the program, E.S. and X.Z. were partially supported by Ivan Corwin's Simons Investigator Grant \#929852. In addition, X.Z. was supported by Ivan Corwin's NSF grant DMS:2246576. E.R.E. was supported by Princeton University's Office of Undergraduate Research OURSIP Program through the Frances Lane Summer Research Fund and by Princeton University's Office of Undergraduate Research Undergraduate Fund for Academic Conferences through the Hewlett Foundation Fund. This material is based upon work supported by the National Science Foundation under Grant No. 2015553. We thank Ivan Corwin and Amol Aggarwal for helpful discussions related to this project. We also thank the anonymous referee for helpful comments on an earlier version of this paper that have greatly improved the presentation of our results. 

\section{Algebraic Properties of the Pitman Transform} \label{sec:algebra}
The end goal of this section is to prove Theorem \ref{thm:braid}, which is done in in Section \ref{subsec:braid}. We start by developing the necessary algebraic background. Versions of the following two results are proved in \cite{corwin2024periodicpitmantransformsjointly} for shifted versions of the maps $D$ and $T$. We provide self-contained proofs in this section.

For this lemma and below, we define the notation
\[
\vecsum(\mbf X) = \sum_{i \in \Z_N} X_i,\quad \text{for }\mathbf X \in \R^{\Z_N}.
\]
\begin{lemma} \label{lem:DT_ordering}
Let $\mbf X_1, \mbf X_2\in \R^{\ZN}$. Then,
\begin{enumerate}[label=(\roman*),font=\normalfont]
    \item \label{sum-Ti-Di} $\vecsum(T(\mbf X_1, \mbf X_2)) = \vecsum(\mbf X_2)$, and $\vecsum(D(\mbf X_1, \mbf X_2)) = \vecsum(\mbf X_1)$.
    \item \label{sum-zTi-zDi} $\vecsum(\zT(\mbf X_1, \mbf X_2)) = \vecsum(\mbf X_2)$, and $\vecsum(\zD(\mbf X_1, \mbf X_2)) = \vecsum(\mbf X_1)$.
\end{enumerate} 
\end{lemma}
\begin{proof}
These follow immediately from the definitions \eqref{eq:pitman-transform} and \eqref{eq:zero_temp_Pitman}, since the sums are telescoping. 
\end{proof}
\begin{lemma} \label{lem:DT_identities} 
Let $\mbf{X}_1, \mbf{X_2}\in \R^{\ZN}$. Then, for $\iZ$,
\begin{enumerate}[label=(\roman*), font = \normalfont]
    \item \label{sum-Di-Ti} $D_i(\mbf{X}_1, \mbf{X_2}) + T_{i}(\mbf{X}_1, \mbf{X_2})=X_{1, i} + X_{2,i}$,
    \item \label{sum-eDi-eTi} $e^{-D_i(\mbf{X}_1, \mbf{X_2})} + e^{-T_{i+1}(\mbf{X}_1, \mbf{X_2})}=e^{-X_{2, i}} + e^{-X_{1, i+1}}$,\quad and
    \item \label{sum-zDi-zTi} $\zD_i(\mbf{X}_1, \mbf{X_2}) + \zT_i(\mbf{X}_1, \mbf{X_2})=X_{1, i} + X_{2,i}$.
\end{enumerate}
\end{lemma}
\begin{proof} 
\textbf{Item \ref{sum-Di-Ti}:} From the definition \eqref{eq:pitman-transform}, we observe that 
\begin{align*}
T_i(\mbf X_1, \mbf X_2)  &= X_{2,i-1} + \log \Biggl(\f{e^{Y_{i-1}}\sum_{\jZ} e^{Y_{(i-1,j]}}}{e^{Y_i} \sum_{\jZ}e^{Y_{(i,j]}}}   \Biggr)  \\
&= X_{2,i-1} + Y_{i-1} - Y_i - \log\Biggl(\f{\sum_{\jZ} e^{Y_{(i,j]}}}{\sum_{\jZ}e^{Y_{(i-1,j]}}}\Biggr) \\
&= X_{2,i} + X_{1,i} - D_i(\mbf X_1, \mbf X_2).
\end{align*}

\medskip \noindent \textbf{Item \ref{sum-eDi-eTi}:} We compute this as follows: 
\begin{align*}
    e^{-D_i(\mathbf{\mathbf{X}_1}, \mathbf{\mathbf{X}_2})} + e^{-T_{i+1}(\mathbf{\mathbf{X}_1}, \mathbf{\mathbf{X}_2})} &= e^{-X_{1, i+1}} \left( \frac{\sum_{j \in \mathbb{Z}_N}e^{Y_{(i-1, j]}}}{\sum_{j \in \mathbb{Z}_N}{e^{Y_{(i, j]}}}} \right) + e^{-X_{2, i}} \left( \frac{\sum_{j \in \mathbb{Z}_N}e^{Y_{[i+1, j]}}}{\sum_{j \in \mathbb{Z}_N}{e^{Y_{[i, j]}}}} \right)\\
    &= e^{-X_{2, i}} \left( \frac{\sum_{j \in \mathbb{Z}_N}e^{Y_{(i-1, j]}}}{e^{Y_i}\sum_{j \in \mathbb{Z}_N}{e^{Y_{(i, j]}}}} \right) + e^{-X_{1, i+1}} \left( \frac{e^{Y_i} \sum_{j \in \mathbb{Z}_N}e^{Y_{[i+1, j]}}}{\sum_{j \in \mathbb{Z}_N}{e^{Y_{[i, j]}}}} \right) \\
    &= e^{-X_{2, i}} \left( 1 + \frac{1 - e^{\sum_{j \in \mathbb{Z}_N}Y_j}}{\sum_{j \in \mathbb{Z}_N}{e^{Y_{[i, j]}}}} \right) + e^{-X_{1, i+1}} \left( 1 + \frac{e^{Y_i}(e^{\sum_{j \in \mathbb{Z}_N}Y_j} - 1)}{\sum_{j \in \mathbb{Z}_N}{e^{Y_{[i, j]}}}} \right) \\
    &= e^{-X_{2, i}} + e^{-X_{1, i+1}} + e^{-X_{2, i}} \left( \frac{1 - e^{\sum_{j \in \mathbb{Z}_N}Y_j}}{\sum_{j \in \mathbb{Z}_N}{e^{Y_{[i, j]}}}} + \frac{e^{{\sum_{j \in \mathbb{Z}_N}}Y_j} - 1}{\sum_{j \in \mathbb{Z}_N}{e^{Y_{[i, j]}}}} \right) \\
    &= e^{-X_{2, i}} + e^{-X_{1, i+1}}.
\end{align*}

\medskip \noindent \textbf{Item \ref{sum-zDi-zTi}:} This follows as the zero-temperature limit of Item \ref{sum-Di-Ti}.  
\end{proof}

\subsection{Matrix encoding} \label{sec:matrix_encoding}
The proofs of Theorems \ref{thm:braid} and \ref{thm:polymer_invariance} follow by using the algebraic properties above to prove certain matrix relations. For this, we adapt the framework of Noumi and Yamada \cite{noumi2002tropicalrobinsonschenstedknuthcorrespondencebirational} to infinite matrices which have a periodic structure. 

\begin{definition}\label{def:EH_definition}
For $\bx\in\R^\Z$,  we introduce two functions $E:\R^{\Z}\to\R^{\Z\times \Z}$ and $H:\R^{\Z}\to\R^{\Z\times \Z}$ as follows: 
\be \label{eq:EH_definition}
E(\bx):=\sum_{i\in\Z} e^{x_i}E_{i,i}+\sum_{i\in\Z} E_{i,i+1}\,,\quad  H(\bx):=\sum_{i\le j} e^{x_i+x_{i+1}+\ldots+ x_j} E_{i,j} \ee
where $E_{p,q}$ is the $\Z\times\Z$ matrix with a $1$ at index $(p,q)$ and $0$s elsewhere.
\end{definition}

These matrix functions admit natural extensions for $\bx\in\R^{\ZN}$, where, for $k\notin\{1,\ldots, N\}$, we simply set $x_k\coloneqq x_{\ell}$, where $\ell$ is the unique element of $\{1,\ldots,N\}$ satisfying $\ell \equiv k\pmod N$. All of the same properties that hold for $\Z$ trivially also hold for $\ZN$, via the inclusion map $i:\R^{\ZN}\to\R^\Z$.

The intuition behind this approach is that matrix multiplication $H(\mbf x_1)\cdots H(\mbf x_n)$ naturally encodes the partition function across a certain diagram with vertex weights given by the arrays $\mbf x_1,\ldots, \mbf x_n$ as discussed later in Lemma \ref{lem:H_partition}. Meanwhile, the outputs of map $E$ are more algebraically straightforward and closely related to the matrix inverse of map $H$, which we prove in the next Lemma \ref{lem:EH_relation} as an infinite-dimensional analogue of \cite[Equation 1.5]{noumi2002tropicalrobinsonschenstedknuthcorrespondencebirational}.
\begin{lemma}\label{lem:EH_relation}
    For all $\bx \in \R^\Z$, we have
    \be\label{eq:EH_relation}
    \Delta H(\bx)\Delta E(-\bx)=E(-\bx)\Delta H(\bx)\Delta= \mathrm{Id},
    \ee
    where $\Delta=\diag ((-1)^{i-1})_{i\in\Z}$ and $\mathrm{Id}=\diag(1)_{i\in \Z}$.
\end{lemma}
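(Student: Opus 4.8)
The plan is to verify the identity $\Delta H(\bx)\Delta E(-\bx) = \Id_\Z$ entrywise, exploiting the fact that $H(\bx)$ is upper-triangular with explicit geometric-type entries and $E(-\bx)$ is bidiagonal, so the matrix products converge without any infinite-sum issues (for each fixed output index $(i,j)$ only finitely many terms contribute). First I would record the entries: $H(\bx)_{p,q} = e^{x_p + x_{p+1} + \cdots + x_q}$ for $p \le q$ and $0$ otherwise, while $E(-\bx)_{p,q} = e^{-x_p}$ if $q = p$, $E(-\bx)_{p,q} = 1$ if $q = p+1$, and $0$ otherwise. Conjugating by $\Delta = \diag((-1)^{i-1})$ multiplies the $(p,q)$ entry of a matrix by $(-1)^{p-q}$, so $(\Delta H(\bx)\Delta)_{p,q} = (-1)^{p-q} H(\bx)_{p,q}$. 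It is cleanest to first establish the left identity $E(-\bx)\,\Delta H(\bx)\Delta = \Id_\Z$ and then note the other one follows by an analogous (transposed) computation, or by observing that a two-sided inverse is unique once one side is checked together with the fact that these are the correct-shaped triangular matrices.

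The main computation: fix indices $i \le j$ and compute $\bigl(E(-\bx)\,\Delta H(\bx)\Delta\bigr)_{i,j} = \sum_{k} E(-\bx)_{i,k}\,(-1)^{k-j} H(\bx)_{k,j}$. Only $k = i$ and $k = i+1$ contribute, giving
\[
e^{-x_i}(-1)^{i-j} H(\bx)_{i,j} + (-1)^{i+1-j} H(\bx)_{i+1,j}.
\]
When $i < j$, both $H$ entries are present: $H(\bx)_{i,j} = e^{x_i} H(\bx)_{i+1,j}$, so the expression is $(-1)^{i-j}H(\bx)_{i+1,j} + (-1)^{i+1-j}H(\bx)_{i+1,j} = 0$. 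When $i = j$, the second term vanishes (since $H(\bx)_{i+1,i} = 0$ as $i+1 > i$) and the first term is $e^{-x_i}\cdot 1 \cdot e^{x_i} = 1$. When $i > j$ all terms vanish. This gives $E(-\bx)\Delta H(\bx)\Delta = \Id_\Z$. For the right identity $\Delta H(\bx)\Delta E(-\bx) = \Id_\Z$, I would run the symmetric computation: $\bigl(\Delta H(\bx)\Delta E(-\bx)\bigr)_{i,j} = \sum_k (-1)^{i-k}H(\bx)_{i,k} E(-\bx)_{k,j}$, where now the sum over $k$ collapses to $k = j$ and $k = j-1$, and the same cancellation $H(\bx)_{i,j} = H(\bx)_{i,j-1} e^{x_j}$ handles the off-diagonal case while the diagonal case gives $1$.

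I do not expect a genuine obstacle here; the only thing to be slightly careful about is convergence/well-definedness of the infinite matrix products, which is immediate because $H(\bx)$ is upper-triangular and $E(-\bx)$ has only two nonzero diagonals, so every entry of each product is a finite sum — I would state this explicitly once at the start so that associativity and the entrywise manipulations are justified. The extension to $\bx \in \R^{\ZN}$ is automatic via the inclusion $i:\R^{\ZN}\to\R^\Z$ as noted before the lemma, so nothing further is needed there.
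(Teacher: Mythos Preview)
Your proposal is correct and follows essentially the same approach as the paper: both exploit the bidiagonal structure of $E(-\bx)$ to reduce each entry of the product to two terms, and then use the relation $H(\bx)_{i,j} = e^{x_i}H(\bx)_{i+1,j}$ (equivalently $H(\bx)_{i,j} = H(\bx)_{i,j-1}e^{x_j}$) to obtain a telescoping cancellation off the diagonal. The only cosmetic difference is that the paper computes $H(\bx)\Delta E(-\bx)$ and shows it equals $\Delta$ before left-multiplying by $\Delta$, whereas you conjugate first and compute $E(-\bx)\,\Delta H(\bx)\Delta$ entrywise.
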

\begin{proof}
We compute $H(\bx) \Delta E(-\bx)$ as follows:
\begin{align*}
    H(\bx) \Delta E(-\bx) &= H(\bx)\left(\sum_{j \in \Z}(-1)^{j-1} e^{-x_j}E_{j,j} + \sum_{j \in \Z}(-1)^{j-1} E_{j,j+1}\right) \\
    &= H(\bx) \sum_{j \in \Z} (-1)^{j-1} e^{-x_j} E_{j,j} + H(\bx) \sum_{j \in \Z}(-1)^{j-1} E_{j,j+1} \\
    &=\sum_{i\in \Z} (-1)^{i-1}E_{i,i} +\sum_{j\ge i} \bigl( (-1)^{j-1} e^{x_i + \cdots + x_{j-1}} + (-1)^{j-2} e^{x_i + \cdots + x_{j-1}} \bigr)E_{i,j} \\
    &= \Delta\ ,
\end{align*}
where the third equality is obtained by shifting the index of the second summand from $j$ to $j-1$ and last equality above follows by a telescoping of terms. A similar calculation shows that $E(-\bx)\Delta H(\bx) = \Delta$. This concludes the proof, since one can easily check that $\Delta^2=\mathrm{Id}$.
\end{proof}

\begin{remark}
    Noticing that $\Delta H(\bx)\Delta$ is a left and right inverse for $E(-\bx)$, we see that it is an inverse in the semi-group of finite valued upper-triangular $\Z\times\Z$ matrices under matrix multiplication. Under this notation, we see that Lemma \ref{lem:EH_relation} translates to
    \be \label{eq:H-inverse} 
    H(\bx)=\Delta E(-\bx)^{-1}\Delta.
    \ee
\end{remark}
\begin{lemma}\label{lem:EDT_identity}
    For $\mbf X_1,\mbf X_2\in\R^{\ZN}$, we have that
    \be\label{eq:EDT_identity}
    E(-\mbf X_2)E(-\mbf X_1)=E(-D(\mbf X_1,\mbf X_2))E(-T(\mbf X_1, \mbf X_2)).
    \ee
\end{lemma}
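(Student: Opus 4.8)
The plan is to expand both sides of~\eqref{eq:EDT_identity} as banded bi-infinite matrices and compare them entry by entry. Since $E(-\bx)$ is upper bidiagonal — with $(i,i)$-entry $e^{-x_i}$ and $(i,i+1)$-entry $1$ — a one-line computation shows that for any $\mbf u,\mbf v\in\R^{\ZN}$ the product $E(-\mbf u)E(-\mbf v)$ is supported on three diagonals, with $(i,i)$-entry $e^{-u_i-v_i}$, $(i,i+1)$-entry $e^{-u_i}+e^{-v_{i+1}}$, and $(i,i+2)$-entry $1$ (all other entries zero, and every entrywise sum is finite because the matrices are banded, so no convergence issue arises). Taking $(\mbf u,\mbf v)=(\mbf W_2,\mbf W_1)$ on the left-hand side of~\eqref{eq:EDT_identity} and $(\mbf u,\mbf v)=(D(\mbf W_2,\mbf W_1),T(\mbf W_2,\mbf W_1))$ on the right-hand side, the $(i,i+2)$-entries match trivially, so~\eqref{eq:EDT_identity} reduces to two scalar identities valid for every $i\in\ZN$: writing $T_i,D_i$ for the $i$-th components of $T(\mbf W_2,\mbf W_1),D(\mbf W_2,\mbf W_1)$,
\[
\text{(a)}\quad D_i+T_i=W_{2,i}+W_{1,i},\qquad\qquad \text{(b)}\quad e^{-D_i}+e^{-T_{i+1}}=e^{-W_{2,i}}+e^{-W_{1,i+1}}.
\]

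To prove (a) and (b), I would set $Y_\ell:=W_{1,\ell+1}-W_{2,\ell}$ (the $Y$ appearing in~\eqref{eq:pitman-transform}) and introduce the cyclic sums $P_i:=\sum_{\jZ}e^{Y_{[i,j]}}$ and $Q_i:=\sum_{\jZ}e^{Y_{(i,j]}}$, along with $\Sigma:=\sum_{\ell\in\ZN}Y_\ell$. By~\eqref{eq:pitman-transform} this gives $T_i=W_{2,i-1}+\log(P_{i-1}/P_i)$ and $D_i=W_{1,i+1}+\log(Q_i/Q_{i-1})$. The two elementary facts about these sums that drive everything are $P_i=e^{Y_i}Q_i$ (because $Y_{[i,j]}=Y_i+Y_{(i,j]}$ for every $\jZ$, using the convention $Y_{(i,i]}=0$) and $P_i-Q_{i-1}=e^{\Sigma}-1$ (because the arc families $\{[i,j]:\jZ\}$ and $\{(i-1,j]:\jZ\}$ consist of the same arcs based at $i$, except that the first contains the full cycle, contributing the term $e^{\Sigma}$ at $j=i-1$, where the second contains the empty arc, contributing $1$).

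Given these two facts, (a) is immediate: $T_i+D_i=W_{2,i-1}+W_{1,i+1}+\log\frac{P_{i-1}Q_i}{P_iQ_{i-1}}=W_{2,i-1}+W_{1,i+1}+(Y_{i-1}-Y_i)$, using $P_{i-1}/Q_{i-1}=e^{Y_{i-1}}$ and $P_i/Q_i=e^{Y_i}$, and this equals $W_{1,i}+W_{2,i}$ after substituting $Y_{i-1}=W_{1,i}-W_{2,i-1}$ and $Y_i=W_{1,i+1}-W_{2,i}$. For (b), I would substitute $e^{-D_i}=e^{-W_{1,i+1}}Q_{i-1}/Q_i$ and $e^{-T_{i+1}}=e^{-W_{2,i}}P_{i+1}/P_i$, clear denominators by multiplying through by $P_i=e^{Y_i}Q_i$, and use $e^{-W_{1,i+1}}e^{Y_i}=e^{-W_{2,i}}$ to reduce (b) to the single identity $Q_{i-1}+P_{i+1}=P_i+Q_i$; this in turn is exactly the difference of the two instances $P_{i+1}-Q_i=e^{\Sigma}-1$ and $P_i-Q_{i-1}=e^{\Sigma}-1$ of the second fact above.

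The only delicate point is the bookkeeping of the periodic arc sums — in particular correctly isolating the wrap-around terms, which is precisely what the relation $P_i-Q_{i-1}=e^{\Sigma}-1$ encodes; once that is in place, (a) and (b) are routine algebra. An alternative route, which I expect to be no simpler, would be to use Lemma~\ref{lem:EH_relation} to recast~\eqref{eq:EDT_identity} as the identity $H(\mbf W_1)H(\mbf W_2)=H(T(\mbf W_2,\mbf W_1))H(D(\mbf W_2,\mbf W_1))$ for the $H$-matrices and argue with those instead.
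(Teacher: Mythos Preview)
Your proposal is correct and takes essentially the same approach as the paper: both expand the bidiagonal products into tridiagonal matrices and reduce the claim to the two scalar identities (a) and (b). The only difference is that the paper invokes these identities as Lemma~\ref{lem:DT_identities} (imported from prior work), whereas you give a self-contained derivation via the cyclic-sum relations $P_i=e^{Y_i}Q_i$ and $P_i-Q_{i-1}=e^{\Sigma}-1$; your argument is a valid proof of that lemma.
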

\begin{proof}
By direct computation and applying Lemma \ref{lem:DT_identities} in the third equality below, we see that
\begin{align*}
    &\quad \, E(-D(\mbf X_1,\mbf X_2))E(-T(\mbf X_1,\mbf X_2)) \\
    &=\left(\sum_{i\in\Z} e^{-D_{i}(\mbf X_1,\mbf X_2)}E_{i,i}+\sum_{i\in\Z}E_{i,i+1}\right) \left(\sum_{i\in\Z} e^{-T_{i}(\mbf X_1,\mbf X_2)}E_{i,i}+\sum_{i\in\Z}E_{i,i+1}\right)\\
    &=\sum_{i\in\Z} e^{-(D_{i}(\mbf X_1,\mbf X_2)+T_{i}(\mbf X_1,\mbf X_2))} E_{i,i}+\sum_{i\in\Z} \left(e^{-D_{i}(\mbf X_1,\mbf X_2)}+e^{-T_{i+1}(\mbf X_1,\mbf X_2)}\right) E_{i,i+1}+\sum_{i\in \Z} E_{i,i+2}\\
    &=\sum_{i\in\Z} e^{-(X_{1,i}+X_{2,i})} E_{i,i}+\sum_{i\in\Z}\left(e^{-X_{1,i+1}}+e^{-X_{2,i}}\right) E_{i,i+1}+\sum_{i\in \Z} E_{i,i+2}\\
    &= \left(\sum_{i\in\Z} e^{-X_{2,i}}E_{i,i}+\sum_{i\in\Z}E_{i,i+1}\right) \left(\sum_{i\in\Z} e^{-X_{1,i}}E_{i,i}+\sum_{i\in\Z}E_{i,i+1}\right) =E(-\mbf X_2)E(-\mbf X_1). \qedhere
\end{align*}
\end{proof}

The following is the key step that will allow us to prove both Theorem \ref{thm:braid} and Theorem \ref{thm:polymer_invariance}. 
\begin{proposition} \label{prop:H_relation}
For $\mbf X_1,\mbf X_2 \in \R^{\ZN}$, we have
\be \label{eq:H_relation}
H(\mbf X_1)H(\mbf X_2)=H(T(\mbf X_1,\mbf X_2))H(D(\mbf X_1,\mbf X_2)).
\ee
\end{proposition}
\begin{proof}
First, we justify that this matrix product has finite non-negative real entries. Because the outputs of the matrix map $H$ (as well as matrix map $E$) are upper triangular, each individual index in the product consists of a finite sum of products of elements in the respective $H$ matrices. While each row and column has infinitely many non-zero terms, there are only a finite number of non-zero terms in each dot product. Further, the resulting matrix is upper triangular, so by repeating this argument, we can see that an arbitrary product of $H$ (or $E$) matrices has finite non-negative real entries.

With this in mind, the proof follows from Lemma \ref{lem:EH_relation}, since:
        \begin{align*}
            H(\mbf X_1)H(\mbf X_2)&\overset{\eqref{eq:EH_relation}}{=}(\Delta E(-\mbf X_1)^{-1} \Delta)(\Delta E(-\mbf X_2)^{-1}\Delta)\\
            &=\Delta E(-\mbf X_1)^{-1}E(-\mbf X_2)^{-1}\Delta\\
            &=\Delta \bigl(E(-\mbf X_2)E(-\mbf X_1)\bigr)^{-1}\Delta\\
            &\overset{\eqref{eq:EDT_identity}}{=}\Delta \bigl(E(-D(\mbf X_1,\mbf X_2))E(-T(\mbf X_1,\mbf X_2))\bigr)^{-1}\Delta\\
            &=(\Delta E(-T(\mbf X_1,\mbf X_2))^{-1}\Delta)(\Delta E(-D(\mbf X_1,\mbf X_2))^{-1}\Delta)\\
            &=H(T(\mbf X_1,\mbf X_2))H(D(\mbf X_1,\mbf X_2)). \qedhere
        \end{align*}
\end{proof}

\subsection{Proof of Theorem \ref{thm:braid}} \label{subsec:braid}

As observed previously, Item \ref{it:commute} is immediate from the definition. We prove Items \ref{it:involution} and \ref{it:braid} separately. 
\subsubsection{Proof of Item \ref{it:involution}:}
Our goal is to show that
\be\label{eq:P-involution}
    \Pcal (\Pcal(\mbf X_1,\mbf X_2)) = (T(T(\mbf X_1,\mbf X_2),D(\mbf X_1,\mbf X_2)),D(T(\mbf X_1,\mbf X_2),D(\mbf X_1,\mbf X_2))) = (\mbf X_1,\mbf X_2).
\ee
Firstly, with the convention $Y_\ell = X_{1,\ell+1}- X_{2,\ell}$ and $Y'_\ell=T_{\ell+1}(\mbf X_1,\mbf X_2)-D_\ell(\mbf X_1,\mbf X_2)$, notice that
\[
\begin{aligned}
    T_i(T(\mbf X_1,\mbf X_2),D(\mbf X_1,\mbf X_2)) &= D_{i-1}(\mbf X_1,\mbf X_2) +\log \Biggl(\f{\sum_{\jZ} e^{Y'_{[i-1,j]}}}{\sum_{\jZ}e^{Y'_{[i,j]}}}   \Biggr)\\
    &=X_{1,i} + \log\Biggl(\f{\sum_{\jZ} e^{Y_{(i-1,j]}}}{\sum_{\jZ}e^{Y_{(i-2,j]}}}\Biggr)+\log \Biggl(\f{\sum_{\jZ} e^{Y'_{[i-1,j]}}}{\sum_{\jZ}e^{Y'_{[i,j]}}}   \Biggr),
\end{aligned}
\]
and this equals $X_{1,i}$ if and only if 
\[\f{\sum_{\jZ} e^{Y'_{[i,j]}}}{\sum_{\jZ}e^{Y_{(i-1,j]}}}= \f{\sum_{\jZ} e^{Y'_{[i-1,j]}}}{\sum_{\jZ}e^{Y_{(i-2,j]}}}.
\]
Hence, it is sufficient to prove that the quantity $\f{\sum_{\jZ} e^{Y'_{[i,j]}}}{\sum_{\jZ}e^{Y_{(i-1,j]}}}$ is the same for all $i\in \Z_N$. To show this, first observe that 
\be
\begin{aligned}
    e^{Y'_i} &= e^{X_{2,i}-X_{1,i+1}} \Biggl(\f{\sum_{\ell\in\Z_N} e^{Y_{[i,\ell]}}}{\sum_{\ell\in\Z_N}e^{Y_{[i+1,\ell]}}}   \Biggr)\Biggl(\f{\sum_{\ell\in\Z_N} e^{Y_{(i-1,\ell]}}}{\sum_{\ell\in\Z_N}e^{Y_{(i,\ell]}}}   \Biggr) \\
    \Longrightarrow e^{Y'_{[i,j]}} &= e^{-Y_{[i,j]}}\Biggl(\f{\sum_{\ell\in\Z_N} e^{Y_{[i,\ell]}}}{\sum_{\ell\in\Z_N}e^{Y_{[j+1,\ell]}}}   \Biggr) \Biggl(\f{\sum_{\ell\in\Z_N} e^{Y_{(i-1,\ell]}}}{\sum_{\ell\in\Z_N}e^{Y_{(j,\ell]}}}   \Biggr)\\
    \Longrightarrow \sum_{\jZ} e^{Y'_{[i,j]}} &=\sum_{\jZ}e^{-Y_{(i,j]}}\Biggl(\f{\sum_{\ell\in\Z_N} e^{Y_{(i,\ell]}}}{\sum_{\ell\in\Z_N}e^{Y_{[j+1,\ell]}}}   \Biggr) \Biggl(\f{\sum_{\ell\in\Z_N} e^{Y_{(i-1,\ell]}}}{\sum_{\ell\in\Z_N}e^{Y_{(j,\ell]}}}   \Biggr)\\
    \Longrightarrow \f{\sum_{\jZ} e^{Y'_{[i,j]}}}{\sum_{\jZ}e^{Y_{(i-1,j]}}}&= \Biggl(\sum_{\ell \in \Z_N} e^{Y_{(i,\ell]}}\Biggr) \sum_{j \in \Z_N} \f{e^{-Y_{(i,j]}}}{\sum_{\ell\in\Z_N}e^{Y_{[j+1,\ell]}} \sum_{\ell\in\Z_N}e^{Y_{(j,\ell]}} }.
\end{aligned}
\ee
Now, we note that $\sum_{\ell\in\Z_N}e^{Y_{[j+1,\ell]}}-\sum_{\ell\in\Z_N}e^{Y_{(j,\ell]}} = e^{\mathfrak{s}(Y)}-1$ for each $j\in\ZN$. Then, multiplying and dividing by this term, we obtain
\be\label{eq:sum-}
\begin{aligned}
    \f{\sum_{\jZ} e^{Y'_{[i,j]}}}{\sum_{\jZ}e^{Y_{(i-1,j]}}}&=\Biggl(\f{\sum_{\ell \in \Z_N} e^{Y_{(i,\ell]}}}{e^{\mathfrak{s}(Y)}-1}\Biggr) \sum_{j \in \Z_N} \f{e^{-Y_{(i,j]}}\bigl(\sum_{\ell\in\Z_N}e^{Y_{[j+1,\ell]}} -\sum_{\ell\in\Z_N}e^{Y_{(j,\ell]}}\bigr)}{\sum_{\ell\in\Z_N}e^{Y_{[j+1,\ell]}} \sum_{\ell\in\Z_N}e^{Y_{(j,\ell]}} }\\
    &=\Biggl(\f{\sum_{\ell \in \Z_N} e^{Y_{(i,\ell]}}}{e^{\mathfrak{s}(Y)}-1}\Biggr) \sum_{j \in \Z_N} \Biggl(\f{e^{-Y_{(i,j]}}}{ \sum_{\ell\in\Z_N}e^{Y_{(j,\ell]}}}-\f{e^{-Y_{(i,j]}-Y_{j+1}}}{ \sum_{\ell\in\Z_N}e^{Y_{(j+1,\ell]}}}\Biggr)\\
    &=\Biggl(\f{\sum_{\ell \in \Z_N} e^{Y_{(i,\ell]}}}{e^{\mathfrak{s}(Y)}-1}\Biggr) \Biggl(\f{1}{ \sum_{\ell\in\Z_N}e^{Y_{(i,\ell]}}}-\f{e^{-\mathfrak{s}(Y)}}{ \sum_{\ell\in\Z_N}e^{Y_{(i,\ell]}}}\Biggr)=  e^{-\mathfrak{s}(Y)},
\end{aligned}
\ee
where in the penultimate step, we start the sum at $j = i$, note that the sum is telescoping, and use the fact that 
\[
Y_{(i,j]}+Y_{j+1}=\begin{cases}Y_{(i,j+1]},&\text{if }j\not\equiv i-1\\\mathfrak{s}(Y),&\text{if }j\equiv i-1\end{cases}.
\]
Our final expression does not depend on $i$, as we wanted to prove. Similarly, we notice that
\be
\begin{aligned}
    D_i(T(\mbf X_1,\mbf X_2),D(\mbf X_1,\mbf X_2)) &= T_{i+1}(\mbf X_1,\mbf X_2) +\log \Biggl(\f{\sum_{\jZ} e^{Y'_{(i,j]}}}{\sum_{\jZ}e^{Y'_{(i-1,j]}}}   \Biggr)\\
    &=X_{2,i} + \log \Biggl(\f{\sum_{\jZ} e^{Y_{[i,j]}}}{\sum_{\jZ}e^{Y_{[i+1,j]}}}   \Biggr) + \log \Biggl(\f{\sum_{\jZ} e^{Y'_{(i,j]}}}{\sum_{\jZ}e^{Y'_{(i-1,j]}}}   \Biggr),
\end{aligned}
\ee
and this equals $X_{2,i}$ if and only if 
\[\f{\sum_{\jZ} e^{Y'_{(i,j]}}}{\sum_{\jZ}e^{Y_{[i+1,j]}}}= \f{\sum_{\jZ} e^{Y'_{[i-1,j]}}}{\sum_{\jZ}e^{Y_{(i-2,j]}}}.
\]
Hence, it is sufficient to prove that the quantity $\f{\sum_{\jZ} e^{Y'_{(i,j]}}}{\sum_{\jZ}e^{Y_{[i+1,j]}}}$ is the same for all $i\in \Z_N$. From our previous observations, we know that
\be
\begin{aligned}
    e^{Y'_{(i,j]}} &= e^{-Y_{(i,j]}}\Biggl(\f{\sum_{\ell\in\Z_N} e^{Y_{[i+1,\ell]}}}{\sum_{\ell\in\Z_N}e^{Y_{[j+1,\ell]}}}   \Biggr) \Biggl(\f{\sum_{\ell\in\Z_N} e^{Y_{(i,\ell]}}}{\sum_{\ell\in\Z_N}e^{Y_{(j,\ell]}}}   \Biggr)\\
    \Longrightarrow \f{\sum_{\jZ} e^{Y'_{(i,j]}}}{\sum_{\jZ}e^{Y_{[i+1,j]}}}&= \Biggl(\sum_{\ell \in \Z_N} e^{Y_{(i,\ell]}}\Biggr) \sum_{j \in \Z_N} \f{e^{-Y_{(i,j]}}}{\sum_{\ell\in\Z_N}e^{Y_{[j+1,\ell]}} \sum_{\ell\in\Z_N}e^{Y_{(j,\ell]}} },
\end{aligned}
\ee
which we have already shown to be equal to $e^{-\mathfrak{s}(Y)}$, which does not depend on $i$. This completes the proof.

\subsubsection{Proof of Item \ref{it:braid}} Notice that the relation $\Pcal_k \Pcal_{k+1} \Pcal_k = \Pcal_{k+1} \Pcal_k \Pcal_{k+1}$ translates into the following three equalities: 
\begin{align}
    \label{eq:braid-1}
    &T\Bigl(T(\mbf X_{k},\mbf X_{k+1}),T\bigl(D(\mbf X_{k},\mbf X_{k+1}),\mbf X_{k+2}\bigr)\biggr) =T\Bigl(\mbf X_{k},T(\mbf X_{k+1},\mbf X_{k+2})\Bigr),\\ \label{eq:braid-2}
    &D\Bigl(T(\mbf X_{k},\mbf X_{k+1}),T\bigl(D(\mbf X_{k},\mbf X_{k+1}),\mbf X_{k+2}\bigr)\Bigr) = T\Bigl(D\bigl(\mbf X_{k},T(\mbf X_{k+1},\mbf X_{k+2})\bigr),D(\mbf X_{k+1},\mbf X_{k+2})\Bigr),\\
    \label{eq:braid-3}
    &D\Bigl(D(\mbf X_{k},\mbf X_{k+1}),\mbf X_{k+2}\Bigr) = D\Bigl(D\bigl(\mbf X_{k},T(\mbf X_{k+1},\mbf X_{k+2})\bigr),D(\mbf X_{k+1},\mbf X_{k+2})\Bigr).
\end{align}
The last of these, equation \eqref{eq:braid-3} was proved previously as \cite[Proposition 5.13]{corwin2024periodicpitmantransformsjointly}, recorded in Lemma \ref{lem:braid-original} of the appendix. We now use the matrix relation of Proposition \ref{prop:H_relation} to demonstrate that \eqref{eq:braid-3} implies both \eqref{eq:braid-1} and \eqref{eq:braid-2}. The main tool will be the following lemma:
\begin{lemma}\label{lem:sum-partition-inv}
    For $\mbf W=(\mbf W_i)_{i\in\Z}, \mbf W'=(\mbf W'_i)_{i\in\Z}\in (\R^{\ZN})^{\Z}$ satisfying $\vecsum(\mbf W_1)=\vecsum(\mbf W'_1)$ we have that $H(\mbf W_1)H(\mbf W_2)=H(\mbf W'_1)H(\mbf W_2')$ if and only if $\mbf W_1=\mbf W'_1$ and $\mbf W_2=\mbf W_2'$.
\end{lemma}
Before proving Lemma \ref{lem:sum-partition-inv}, we first use  Lemmas \ref{lem:sum-partition-inv} and \ref{lem:braid-original} to complete the proof of Theorem \ref{thm:braid} \ref{it:braid}:
\begin{proof}[Proof of Theorem \ref{thm:braid}\ref{it:braid}.] We apply Proposition \ref{prop:H_relation} three times: first to the last two terms, then the first two, then the last two again:
\begin{align}
&H\bigl(\mbf X_{k}\bigr)H\bigl(\mbf X_{k+1}\bigr)H\bigl(\mbf X_{k+2}\bigr)=\nonumber \\
    &\overset{\eqref{eq:H_relation}}{=}H\bigl(\mbf X_{k}\bigr)H\bigl(T(\mbf X_{k+1},\mbf X_{k+2})\bigr)H\bigl( D(\mbf X_{k+1}, \mbf X_{k+2})\bigr)\nonumber \\
    &\overset{\eqref{eq:H_relation}}{=}H\Bigl(T\bigl(\mbf X_{k},T(\mbf X_{k+1}, \mbf X_{k+2})\bigr)\Bigr) H\Bigl(D\bigl(\mbf X_{k},T(\mbf X_{k+1}, \mbf X_{k+2})\bigr)\Bigr)H\Bigl(D(\mbf X_{k+1}, \mbf X_{k+2})\Bigr)\nonumber \\
    &\overset{\eqref{eq:H_relation}}{=}H\Bigl(T\bigl(\mbf X_{k},T(\mbf X_{k+1}, \mbf X_{k+2})\bigr))\Bigr) H\Bigl(T\bigl(D\bigl(\mbf X_{k},T(\mbf X_{k+1}, \mbf X_{k+2})\bigr),D(\mbf X_{k+1}, \mbf X_{k+2})\bigr)\Bigr)\nonumber  \\ &\qquad H\Bigl(D\bigl(D\bigl(\mbf X_{k},T(\mbf X_{k+1}, \mbf X_{k+2})\bigr),D(\mbf X_{k+1}, \mbf X_{k+2})\bigr)\Bigr). \label{eq:P_k+1P_kP_k+1}
\end{align}
Alternatively, we apply Proposition \ref{prop:H_relation} to the first two terms, then the last two terms, then again the first two terms to get
\begin{align}
&H\bigl(\mbf X_{k}\bigr)H\bigl(\mbf X_{k+1}\bigr)H\bigl(\mbf X_{k+2}\bigr)=\nonumber \\
    &\overset{\eqref{eq:H_relation}}{=}H\bigl(T(\mbf X_{k}, \mbf X_{k+1})\bigr)H\bigl(D(\mbf X_{k},\mbf X_{k+1})\bigr)H\bigl(\mbf X_{k+2}\bigr)\nonumber \\
    &\overset{\eqref{eq:H_relation}}{=}H\Bigl(T(\mbf X_{k}, \mbf X_{k+1})\Bigr)H\Bigl(T\bigl(D(\mbf X_{k},\mbf X_{k+1}),\mbf X_{k+2}\bigr)\Bigr)H\Bigl(D\bigl(D(\mbf X_{k},\mbf X_{k+1}),\mbf X_{k+2}\bigr)\Bigr) \nonumber \\
    &\overset{\eqref{eq:H_relation}}{=}H\Bigl(T\bigl(T(\mbf X_{k}, \mbf X_{k+1}),T\bigl(D(\mbf X_{k},\mbf X_{k+1}),\mbf X_{k+2}\bigr)\big)\Bigr)\nonumber \\
    &\qquad H\Bigl(D\bigl(T(\mbf X_{k}, \mbf X_{k+1}),T\bigl(D(\mbf X_{k},\mbf X_{k+1}),\mbf X_{k+2}\bigr)\big)\Bigr) H\Bigl(D\bigl(D(\mbf X_{k},\mbf X_{k+1}),\mbf X_{k+2}\bigr)\Bigr). \label{eq:P_kP_k+1P_k}
\end{align}
From Lemma \ref{lem:braid-original}, we know that the last terms in \eqref{eq:P_k+1P_kP_k+1} and \eqref{eq:P_kP_k+1P_k} are equal. Because $H$ is invertible (see Lemma \ref{lem:EH_relation}), we can equate both equations and cancel out the last term, obtaining that:
\begin{align*}
    &H\Bigl(T\bigl(\mbf X_{k},T(\mbf X_{k+1}, \mbf X_{k+2})\bigr))\Bigr) H\Bigl(T\bigl(D\bigl(\mbf X_{k},T(\mbf X_{k+1}, \mbf X_{k+2})\bigr),D(\mbf X_{k+1}, \mbf X_{k+2})\bigr)\Bigr)\\
    &=H\Bigl(T\bigl(T(\mbf X_{k}, \mbf X_{k+1}),T\bigl(D(\mbf X_{k},\mbf X_{k+1}),\mbf X_{k+2}\bigr)\big)\Bigr)H\Bigl(D\bigl(T(\mbf X_{k}, \mbf X_{k+1}),T\bigl(D(\mbf X_{k},\mbf X_{k+1}),\mbf X_{k+2}\bigr)\big)\Bigr).
\end{align*}
By repeated application of Lemma \ref{lem:DT_ordering}, we have that
\begin{align*}
    &\vecsum(T(\mbf X_{k},T(\mbf X_{k+1},\mbf X_{k+2})))=\vecsum(\mbf X_{k+2})
    =
    \vecsum(T(T(\mbf X_{k}, \mbf X_{k+1}),T(D(\mbf X_{k},\mbf X_{k+1}),\mbf X_{k+2}))).
\end{align*}
Applying Lemma \ref{lem:sum-partition-inv}, we conclude \eqref{eq:braid-1} and \eqref{eq:braid-2}, as we wanted to prove.
\end{proof}

\begin{proof}[Proof of Lemma \ref{lem:sum-partition-inv}.]
    Immediately, the backwards direction is clear. Then, it remains to show that, if  $H(\mbf W_1)H(\mbf W_2)=H(\mbf W_1')H(\mbf W_2')$, then $\mbf W_1=\mbf W_1'$ and $\mbf W_2=\mbf W_2'$. Let us prove by backwards induction on $k=N,\ldots, 1$ that $W_{1,k}=W_{1,k}'$ and $W_{2,k}=W_{2,k}'$.
    
    We start with $k=N$ as our base case. From Lemma \ref{lem:H_partition} below, we have that
    \begin{align}
    \left(H(\mbf W_1)H(\mbf W_2)\right)_{1,N}
    &=Z^{\mbf W}(2,N\mid 1,1)\nonumber\\
    &= e^{W_{2,N}} \left(Z^{\mbf W}(1,N\mid 1,1)+Z^{\mbf W}(2,N-1\mid 1,1)\right)\nonumber\\
    &= e^{W_{2,N}}\Biggl(\prod_{i=1}^{N} e^{W_{1,i}}+\left(H(\mbf W_1)H(\mbf W_2)\right)_{1,N-1}\Biggr).\label{eq:inductionY}
    \end{align}
    Similarly, we conclude that
    \begin{align}
    \left(H(\mbf W'_1)H(\mbf W'_2)\right)_{1,N}
    &=Z^{\mbf W'}(2,N\mid 1,1)\nonumber\\
    &= e^{W'_{2,N}} \left(Z^{\mbf W'}(1,N\mid 1,1)+Z^{\mbf W'}(2,N-1\mid 1,1)\right)\nonumber\\
    &= e^{W'_{2,N}}\Biggl(\prod_{i=1}^{N} e^{W'_{1,i}}+\left(H(\mbf W'_1)H(\mbf W'_2)\right)_{1,N-1}\Biggr).\label{eq:inductionY'}
    \end{align}
    By assumption, we have $H(\mbf W_1)H(\mbf W_2)=H(\mbf W'_1)H(\mbf W_2')$ and $\prod_{i=1}^{N} e^{W_{1,i}} = e^{\vecsum(W_1)} = e^{\vecsum(W'_1)} = \prod_{i=1}^{N} e^{W'_{1,i}}$. Equating \eqref{eq:inductionY} and \eqref{eq:inductionY'} and canceling the nonzero expressions in parentheses yields $W_{2,N}=W'_{2,N}$. Next, we see that $W'_{1,N}=W_{1,N}$, since:
    \be
    (H(\mbf W_1)H(\mbf W_2))_{(N,N)}=Z^{\mbf W}(2,N\mid 1,N)=e^{W_{1,N}+W_{2,N}},
    \ee
    and
    \be
    (H(\mbf W'_1)H(\mbf W_2'))_{(N,N)}=Z^{\mbf W'}(2,N\mid 1,N)=e^{W'_{1,N}+W'_{2,N}}.
    \ee
    To finish the proof, simply notice that the proof of the above only required $\prod_{i=1}^{N} e^{W'_{1,i}}=\prod_{i=1}^{N} e^{W_{1,i}}$. Indeed, if we have 
    $
    \prod_{i = 1}^k e^{W_{1,i}} = \prod_{i = 1}^k e^{W_{2,i}}
    $
    for some $k \in \llbracket 1, N \rrbracket$, then the same proof shows that $W_{1,k} = W_{1,k}'$ and $W_{2,k}= W_{2,k}'$. This follows by backwards induction on $k$, since 
    \[
    \prod_{i = 1}^k e^{W_{1,i}} = e^{\vecsum(\mbf W_1)} \Biggl(\prod_{i= k+1}^N e^{W_{1,i}}\Biggr)^{-1}. \qedhere
    \]
\end{proof}

\section{Invariance of polymer partition functions} \label{sec:polymer_invariance}
In this section, we prove Theorem \ref{thm:polymer_invariance}. The key to our proof is a matrix encoding of polymer partition functions, analogous to that developed in \cite{noumi2002tropicalrobinsonschenstedknuthcorrespondencebirational}. In our setting, this is given by the following lemma. 
\begin{lemma}\label{lem:H_partition} Let $\mbf X=(\mbf X_i)_{i\in\Z}\in(\R^{\ZN})^{\Z}$ and $k,\ell\in \Z$ such that $\ell\ge k$, then:
\[
\Bigl(H(\mbf X_{k})H(\mbf X_{k+1})\cdots H(\mbf X_{\ell})\Bigr)_{(i,j)}=Z^{\mbf X}(\ell,j\mid k,i) \quad \forall i,j\in\Z.
\]
In other words, the partition function across any two points $(k,i)$ and $(\ell, j)$ on the directed lattice $\Z^2$ with periodic weights $\mbf X$ is given by the $(i,j)$th entry in the matrix product of the $H$ matrices.
\begin{proof}
    We prove the result by induction on $\ell\ge k$. For the base case, $\ell=k$, by definition we have that
    \[H(\mbf X_k)_{(i,j)}=\begin{cases}
        \exp\Biggl({\sum\limits_{r=i}^j}X_{(k,r)}\Biggr),\quad &j\geq i\\
        0,\ &\text{otherwise}
    \end{cases}\]
    Similarly, on a single column, we either have a unique up/right path between $(k,i)$ and $(k,j)$ picking up all the weights between $i$ and $j$ or have no path between them, depending on whether $j\ge i$. Thus, \[Z^X(k,j\mid k,i)=\begin{cases}
        \exp\Biggl({\sum\limits_{r=i}^j}X_{(k,r)}\Biggr),\quad &j\geq i\\
        0,\ &\text{otherwise}
    \end{cases}=H(\mbf X_k)_{(i,j)}\]
    This proves our base case. Now, let us assume that it holds for some $\ell$, and prove it holds for $\ell+1$. We split into two cases: $j\ge i$ and $j<i$. 
    
    For $j<i$, notice that $H$ is upper triangular, so their product is as well. Thus, the $(i,j)$-th index of our matrix product is $0$ for $j<i$. This matches up with our partition function, as there are no up/right paths whose initial point is lower than its end point.
    
    For the case $j\ge i$, we notice that every path from $(k,i)$ to $(\ell+1,j)$ consists of a path from $(k,i)$ to $(\ell,n)$ concatenated with a path from $(\ell+1,n)$ to $(\ell+1,j)$, for some $i\le n\le j$.
    Thus, 
    \begin{align*}
        Z^{\mbf X}(\ell+1,j\mid k,i)&=\sum\limits_{r=i}^j\Biggl(Z^{\mbf X}(\ell,r\mid k,i)\exp\Bigl(\sum_{s=r}^j X_{(\ell+1,s)}\Bigr)\Biggr)\\
       &=\sum\limits_{r=i}^j\Biggl(Z^{\mbf X}(\ell,r\mid k,i)\Bigl(H(\mbf X_{\ell+1})\Bigr)_{(r,j)}\Biggr).
    \end{align*}
    By our inductive hypothesis, we know that this equals 
    \[\sum\limits_{r=i}^j\Biggl(\Bigl(H(\mbf X_k)H(\mbf X_{k+1})\ldots H(\mbf X_{\ell})\Bigr)_{(i,r)}\Bigl(H(\mbf X_{\ell+1})\Bigr)_{(r,j)}\Biggr).\]
    Remembering now that $\bigl(H(\mbf X_{\ell+1})\bigr)_{(r,j)}=0$ for $r>j$, and $\bigl(H(\mbf X_k)H(\mbf X_{k+1})\ldots H(\mbf X_{\ell})\bigr)_{(i,r)}=0$ for $r<i$, we see that 
    \begin{align*}
        Z^{\mbf X}(\ell+1,j\mid k,i) &=\sum\limits_{r=-\infty}^\infty\Biggl(\Bigl(H(\mbf X_k)H(\mbf X_{k+1})\ldots H(\mbf X_{\ell})\Bigr)_{(i,r)}\Bigl(H(\mbf X_{\ell+1})\Bigr)_{(r,j)}\Biggr)\\
        &=\Bigl(H(\mbf X_k)H(\mbf X_{k+1})\ldots H(\mbf X_{\ell})H(\mbf X_{\ell+1})\Bigr)_{(i,j)}.\qedhere
    \end{align*}
\end{proof}
\end{lemma}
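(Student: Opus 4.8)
The plan is to prove the identity by induction on $\ell - k \ge 0$, exploiting the structure of up-right paths confined to the vertical strip of columns $\llbracket k,\ell\rrbracket$. For the base case $\ell = k$, I would read off directly from Definition \ref{def:EH_definition} that $H(\mbf W_k)_{(i,j)} = \exp\bigl(\sum_{r=i}^j W_{(k,r)}\bigr)$ when $j \ge i$ and $0$ otherwise; on the other side, $\Pi_{(k,i),(k,j)}$ is the single straight-up path through $(k,i),(k,i+1),\ldots,(k,j)$ when $j \ge i$ and is empty when $j < i$, so $Z^{\mbf W}(k,j\mid k,i)$ matches the matrix entry.

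For the inductive step, assume the claim for $\ell$ and prove it for $\ell+1$. When $j < i$ both sides vanish: a product of upper-triangular $\Z\times\Z$ matrices is upper-triangular, and $\Pi_{(k,i),(\ell+1,j)} = \varnothing$ because up-right paths are weakly increasing in the second coordinate. When $j \ge i$, the key observation is that any path $\pi \in \Pi_{(k,i),(\ell+1,j)}$ makes its single transition into column $\ell+1$ via a rightward step $(\ell,r)\to(\ell+1,r)$ for a unique $r$ with $i \le r \le j$, and thereafter can only move upward, hence runs straight through $(\ell+1,r),(\ell+1,r+1),\ldots,(\ell+1,j)$. This yields a weight-compatible bijection between $\Pi_{(k,i),(\ell+1,j)}$ and $\bigsqcup_{r=i}^j \Pi_{(k,i),(\ell,r)}$, so that
\[
Z^{\mbf W}(\ell+1,j\mid k,i) = \sum_{r=i}^j Z^{\mbf W}(\ell,r\mid k,i)\, e^{\sum_{s=r}^j W_{(\ell+1,s)}} = \sum_{r=i}^j Z^{\mbf W}(\ell,r\mid k,i)\, H(\mbf W_{\ell+1})_{(r,j)}.
\]
Substituting the inductive hypothesis for $Z^{\mbf W}(\ell,r\mid k,i)$ and then extending the sum to all $r \in \Z$ — legitimate since $\bigl(H(\mbf W_k)\cdots H(\mbf W_\ell)\bigr)_{(i,r)} = 0$ for $r < i$ and $H(\mbf W_{\ell+1})_{(r,j)} = 0$ for $r > j$ — turns the right-hand side into the $(i,j)$-entry of $H(\mbf W_k)\cdots H(\mbf W_{\ell+1})$.

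The only point needing genuine care, as opposed to routine bookkeeping, is that the infinite matrix products are well-defined with finite entries; this follows from the upper-triangularity of the $H$ matrices (the same observation made in the proof of Corollary \ref{cor:H_relation}), which forces each entry of a product to be a finite sum. The combinatorial heart — the ``strip decomposition'' splitting a path at the rightward step between the last two columns — is elementary, but it is precisely what lets the induction close, and I expect articulating this bijection cleanly to be the main thing to get right.
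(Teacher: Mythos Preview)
Your proposal is correct and follows essentially the same approach as the paper's proof: induction on the number of columns, the same base case computation, the same case split on $j<i$ versus $j\ge i$, and the same ``last-column'' decomposition of paths at the unique rightward step $(\ell,r)\to(\ell+1,r)$, followed by extending the finite sum to all of $\Z$ via upper-triangularity. Your additional remark on well-definedness of the infinite matrix products is handled in the paper just as you anticipate, by reference to the argument in Corollary~\ref{cor:H_relation}.
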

We now show the invariance of polymer partition function under the periodic Pitman transform. Retaining the periodic weight vectors $\mbf X = (\mbf X_r \in \R^{\ZN}: r \in \Z)$ as before, for $k \in \Z$, recall the operator $\Pcal_k: (\R^{\ZN})^\Z \to (\R^{\ZN})^\Z$ where
\[
\Pcal_k \mbf X = (\ldots, \mbf X_{k-1}, T(\mbf X_{k}, \mbf X_{k+1}), D(\mbf X_{k}, \mbf X_{k+1}), \mbf X_{k+2, \ldots}).
\]

\begin{proposition}\label{prop:multi_column}
    Fix $a,c \in \Z$ such that $a\leq c$, and let $k_1, \dots, k_m \in \Z$ such that for all $i\in\llbracket1,m\rrbracket$, $a\ne k_{i}+1$ and $c\ne k_i$. Then, for all $b,d \in \Z$,
    \[
    Z^{\Pcal_{k_m} \cdots \Pcal_{k_1} \mbf X}(c, d \mid a, b) = Z^{\mbf X}(c, d \mid a, b).
    \]
\end{proposition}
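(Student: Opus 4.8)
The plan is to peel off the Pitman operators one at a time to reduce to the single-transform case $m=1$, and then to use the matrix identity of Corollary \ref{cor:H_relation} together with the dictionary of Lemma \ref{lem:H_partition}. Concretely, I would argue by induction on $m$. Since each $\Pcal_k$ maps $(\R^{\ZN})^\Z$ to itself, the sequence $\mbf W' := \Pcal_{k_{m-1}}\cdots\Pcal_{k_1}\mbf W$ is again a bi-infinite sequence of $N$-periodic vectors, and $\Pcal_{k_m}\cdots\Pcal_{k_1}\mbf W = \Pcal_{k_m}\mbf W'$. The induction hypothesis gives $Z^{\mbf W'}(c,d\mid a,b) = Z^{\mbf W}(c,d\mid a,b)$, so it remains only to handle a single operator $\Pcal_{k_m}$ applied to the weights $\mbf W'$, where still $a \ne k_m+1$ and $c \ne k_m$.

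For the case $m=1$, fix $k$ with $a \ne k+1$, $c \ne k$, and $a\le c$. First I would dispose of the case where $\{k,k+1\}$ is disjoint from $\llbracket a,c\rrbracket$, i.e. $k+1 < a$ or $k > c$: then the weights in columns $a$ through $c$ are untouched by $\Pcal_k$, and since every up-right path in $\Pi_{(a,b),(c,d)}$ uses only vertices with column index in $\llbracket a,c\rrbracket$, the two partition functions coincide trivially. The hypotheses exclude exactly the boundary values $k=a-1$ and $k=c$, so the only remaining possibility is $a \le k \le c-1$, in which case both columns $k$ and $k+1$ lie in $\llbracket a,c\rrbracket$. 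For this case I would write, using Lemma \ref{lem:H_partition},
\[
Z^{\mbf W}(c,d\mid a,b) = \bigl(H(\mbf W_a)\cdots H(\mbf W_k)\,H(\mbf W_{k+1})\cdots H(\mbf W_c)\bigr)_{(b,d)},
\]
replace the adjacent pair $H(\mbf W_k)H(\mbf W_{k+1})$ by $H(T(\mbf W_{k+1},\mbf W_k))\,H(D(\mbf W_{k+1},\mbf W_k))$ via Corollary \ref{cor:H_relation}, and then recognize the new product as exactly the matrix product that Lemma \ref{lem:H_partition} associates to the weight sequence $\Pcal_k\mbf W$ on columns $a$ through $c$. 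Reading off the $(b,d)$ entry yields $Z^{\Pcal_k\mbf W}(c,d\mid a,b)$, uniformly in $b$ and $d$.

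I do not expect any real difficulty beyond bookkeeping here: the algebraic heart is entirely contained in Corollary \ref{cor:H_relation}, itself a consequence of the $E$--$H$ inversion relation of Lemma \ref{lem:EH_relation} and the identity of Lemma \ref{lem:EDT_identity}. The one point that needs care is the case analysis on the position of $\{k,k+1\}$ relative to $\llbracket a,c\rrbracket$, and in particular verifying that the hypotheses $a \ne k+1$ and $c \ne k$ are precisely what rules out the configurations $k=a-1$ and $k=c$, in which a transformed column $T(\mbf W_{k+1},\mbf W_k)$ or $D(\mbf W_{k+1},\mbf W_k)$ sits at an endpoint column while its partner lies outside $\llbracket a,c\rrbracket$ — exactly the configurations where the single adjacent matrix swap of Corollary \ref{cor:H_relation} does not apply and the conclusion genuinely fails.
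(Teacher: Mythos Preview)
Your proposal is correct and follows essentially the same approach as the paper: induction on $m$ reducing to the single-operator case, then a case split on the position of $k$ relative to $\llbracket a,c\rrbracket$, with the nontrivial case $a\le k\le c-1$ handled by combining Lemma \ref{lem:H_partition} with the matrix identity of Corollary \ref{cor:H_relation}. The paper's argument is identical in structure and in the tools invoked.
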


\begin{proof}
    
    We will prove this by induction on $m$. By Lemma $\ref{lem:H_partition}$, 
    $$Z^{\mbf X}(c,d \mid a,b) = \bigl(H(\mbf X_a) H(\mbf X_{a+1}) \cdots H(\mbf X_c)\bigr)_{(b,d)},$$ which we claim is preserved by the action $\Pcal_{k_m}\ldots\Pcal_{k_1}$ for any $b,d \in \Z$. In our base case $m = 1$, we split into three sub-cases: $k_1 < a-1$, $k_1 > c$, and $a\leq k_1<c$. In the first two sub-cases, $\mbf X_a\ldots\mbf X_c$ are unaffected by $\Pcal_{k_1}$, so our invariance holds trivially.

    If $a \leq k_1 < c$, then Proposition \ref{prop:H_relation} gives that
    \begin{align*}
        Z^{\mbf X}(c,d \mid a,b)&=\bigl(H(\mbf X_a) H(\mbf X_{a+1}) \ldots H(\mbf X_{k_1})H(\mbf X_{k_1+1})\ldots H(\mbf X_c)\bigr)_{(b,d)}\\
        &\overset{\eqref{eq:H_relation}}{=}\bigl(H(\mbf X_a) H(\mbf X_{a+1}) \ldots H(T(\mbf X_{k_1}, \mbf X_{k_1 + 1}))H(D(\mbf X_{k_1}, \mbf X_{k_1 + 1}))\ldots H(\mbf X_c)\bigr)_{(b,d)}\\
        &=Z^{\Pcal_{k_1}\mbf X}(c,d \mid a,b).
    \end{align*}
    
    Now, we assume by way of induction that the invariance holds for $m-1$ transforms. Again, if $k_m < a-1$ or $k_m > c$ our equivalence holds trivially. Otherwise, let $\mbf X^\prime = \Pcal_{k_{m-1}} \ldots \Pcal_{k_1} \mbf X$. Then \begin{align*}
        Z^{\mbf X}(c,d \mid a,b)&=Z^{\mbf X^\prime}(c,d \mid a,b)\\
        &=Z^{\Pcal_{k_m}\mbf X^\prime}(c,d \mid a,b)\\
        &=Z^{\Pcal_{k_m}\Pcal_{k_{m-1}} \ldots \Pcal_{k_1} \mbf X}(c,d \mid a,b).
    \end{align*}
    This concludes our proof, noting that the second equality follows from the case $m=1$.
\end{proof}

With Proposition \ref{prop:multi_column} proved, we now give one more intermediate lemma before proving Theorem \ref{thm:polymer_invariance}.

\begin{lemma} \label{lem:two_conditions_connect}
    Let $\sigma:\Z \to \Z$ be a finite permutation, decomposed as a product of nearest-neighbor transpositions $(k_m,k_{m}+1)(k_{m-1},k_{m-1} + 1) \cdots(k_1,k_1+1)$, where $k_1,\ldots,k_m \in \Z$. Assume that this decomposition has the minimal number of terms. Then, if $x \in \Z$ satisfies $\sigma(\Z_{<x}) = \Z_{<x}$, then $x \neq k_\ell + 1$ for all $1 \le \ell \le m$. Similarly, if $\sigma(\Z_{>x}) = \Z_{>x}$, then $x \neq k_\ell$ for all $1 \le \ell \le m$.
\end{lemma}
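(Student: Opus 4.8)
The plan is to argue by contradiction: if the claimed conclusion fails, I will produce a shorter decomposition of $\sigma$ into nearest-neighbor transpositions, contradicting minimality. The key observation is that the length (in the Coxeter sense) of a permutation $\sigma$ acting on $\Z$, restricted to a finite window, equals the number of inversions, and that a product of $m$ nearest-neighbor transpositions has length at most $m$, with equality exactly when no cancellation occurs. So a minimal-length decomposition has exactly $m = \ell(\sigma)$ terms, and every intermediate partial product $\sigma_t := (k_t, k_t+1)\cdots(k_1, k_1+1)$ strictly increases the inversion count at each step $t$, i.e. $\ell(\sigma_t) = t$.

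First I would fix notation: suppose $x$ satisfies $\sigma(\Z_{<x}) = \Z_{<x}$ (equivalently $\sigma(\Z_{\ge x}) = \Z_{\ge x}$, so $\sigma$ splits as a permutation of $\{\dots, x-2, x-1\}$ times a permutation of $\{x, x+1, \dots\}$), and suppose for contradiction that $k_\ell + 1 = x$ for some $\ell$. Pick the \emph{largest} such $\ell$; that is, $x = k_\ell+1$ appears as a "crossing point" of the transposition $(x-1, x)$ at step $\ell$, and no later transposition swaps across the $x-1 \mid x$ gap. I want to show this crossing can be removed. Concretely, consider the partial products $\sigma_{\ell-1}$ and $\sigma_\ell = (x-1,x)\,\sigma_{\ell-1}$. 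Because $\sigma$ itself fixes the partition at $x$, and no transposition after step $\ell$ crosses the gap, the net effect is that the value landing in position $x-1$ in $\sigma_{\ell-1}$ and the value in position $x$ get swapped at step $\ell$ and then are never separated across the gap again; tracking these two threads through the wiring diagram shows the strand passing from the left block to the right block at step $\ell$ must cross back, forcing a later crossing — contradiction — OR the swap $(x-1,x)$ at step $\ell$ is a "non-reduced" move, i.e. $\ell(\sigma_\ell) = \ell(\sigma_{\ell-1}) - 1$, contradicting that the decomposition is reduced.

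The cleanest way to make the middle step rigorous is to use the standard criterion: a word $s_{k_m}\cdots s_{k_1}$ in the simple reflections $s_k = (k,k+1)$ is reduced if and only if for every $t$, $\sigma_{t-1}^{-1}(k_t) < \sigma_{t-1}^{-1}(k_t + 1)$ — i.e. at each step the two positions being swapped are "in order" so the swap creates a new inversion. I would apply this at $t = \ell$: since $k_\ell + 1 = x$ and $\sigma$ preserves the splitting at $x$ with no later crossing, I can show $\sigma_{\ell-1}^{-1}(x-1)$ and $\sigma_{\ell-1}^{-1}(x)$ lie on the "wrong sides" — specifically, I'll show $\sigma_{\ell-1}$ already maps something $\ge x$ to $x-1$ or something $<x$ to $x$, so swapping at step $\ell$ destroys rather than creates an inversion, violating the reducedness criterion. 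The symmetric statement (if $\sigma(\Z_{>x}) = \Z_{>x}$ then $x \neq k_\ell$) follows by the mirror argument, swapping the roles of left and right, or equivalently by applying the already-proved half to $\sigma^{-1}$ together with the fact that a reduced word for $\sigma$ reverses to a reduced word for $\sigma^{-1}$ with the index set $\{k_\ell\}$ unchanged.

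The main obstacle I anticipate is bookkeeping: turning the intuitive "strand crossing" picture into a clean argument about $\sigma_{\ell-1}^{-1}$ evaluated at $x-1$ and $x$. The subtlety is that $\sigma_{\ell-1}$ need not preserve the splitting at $x$ — only the full product $\sigma$ does — so I must use the hypothesis that $\ell$ is the \emph{last} crossing step to control where the two relevant threads sit just before step $\ell$. I'd handle this by arguing: after step $\ell$, the partial products $\sigma_\ell, \sigma_{\ell+1}, \dots, \sigma_m = \sigma$ never swap across the $x-1\mid x$ gap, so the multiset of values in positions $\{\dots, x-1\}$ is constant from step $\ell$ onward and equals $\Z_{<x}$ (since $\sigma$ preserves it); running this backward one step, the values in positions $\Z_{<x}$ just \emph{before} step $\ell$ are $(\Z_{<x} \setminus \{a\}) \cup \{b\}$ for the pair $a, b$ being swapped, and since $\{x-1, x\}$ are the swapped positions, comparing with the post-$\ell$ content forces $b \ge x$ and $a < x$, i.e. the thread at position $x-1$ before step $\ell$ carries a value $< x$ and the thread at position $x$ carries a value $\ge x$ — wait, one needs the opposite for non-reducedness, so the correct conclusion is that the value at $x-1$ is $\ge x$ and the value at $x$ is $<x$, which is exactly the inversion-destroying configuration. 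Getting the direction of this inequality right is the delicate point, and I would double-check it against a small explicit example (e.g. $\sigma = s_1 s_2 s_1 = (1\,3)$ with $x = 2$) before committing to the final writeup.
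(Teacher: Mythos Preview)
Your approach is correct and genuinely different from the paper's. You argue by contradiction through the reducedness criterion: taking $\ell$ maximal with $k_\ell = x-1$, you note that left-multiplication by any $s_{k_t}$ with $k_t \neq x-1$ preserves the set $\Z_{<x}$, so tracing back from $\sigma_m(\Z_{<x}) = \Z_{<x}$ gives $\sigma_\ell(\Z_{<x}) = \Z_{<x}$; one further step yields $\sigma_{\ell-1}(\Z_{<x}) = s_{x-1}(\Z_{<x}) = (\Z_{<x}\setminus\{x-1\})\cup\{x\}$, whence $\sigma_{\ell-1}^{-1}(x-1) \ge x > \sigma_{\ell-1}^{-1}(x)$ and step $\ell$ destroys rather than creates an inversion, contradicting minimality. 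The paper instead writes $\sigma = \sigma_1\sigma_2$ with $\sigma_1$ acting on $\Z_{<x}$ and $\sigma_2$ on $\Z_{\ge x}$, and argues that by minimality the given word splits into subsequences realizing $\sigma_1$ and $\sigma_2$ --- implicitly invoking the Coxeter-group fact that the set of simple reflections in any reduced expression is an invariant of the element (so an element of the parabolic generated by $\{s_k : k \neq x-1\}$ has no reduced word using $s_{x-1}$). Your route is more self-contained and makes the contradiction explicit without appealing to that fact; the paper's is shorter once it is granted.

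Two small caveats for your writeup. First, under the convention $\sigma_t = s_{k_t}\cdots s_{k_1}$ (function composition), left-multiplication by $s_{k_t}$ swaps \emph{values} $k_t$ and $k_t+1$, not positions, so your phrases ``values in positions $\Z_{<x}$'' and ``$\{x-1,x\}$ are the swapped positions'' are in tension; the clean formulation is the one about $\sigma_{\ell-1}^{-1}(x-1)$ and $\sigma_{\ell-1}^{-1}(x)$ above, and with that the direction of the inequality is unambiguous. Second, your proposed test example $\sigma = s_1 s_2 s_1 = (1\ 3)$ with $x = 2$ does not satisfy the hypothesis $\sigma(\Z_{<2}) = \Z_{<2}$, so it cannot serve as a sanity check for the argument.
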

\begin{proof}
    We prove the first statement; the second has a symmetric proof. If $\sigma(\Z_{<x}) = \Z_{<x}$, then $\sigma$ can be decomposed as $\sigma = \sigma_1\sigma_2$, where $\sigma_1$ permutes only elements of $\Z_{<x}$, and $\sigma_2$ permutes only elements of $\Z_{\ge x}$. By minimality of the decomposition $\sigma = (k_m,k_{m}+1)(k_{m-1},k_{m-1} + 1) \cdots(k_1,k_1+1)$, we must have that 
    \[
    \sigma_1 = (k_{\ell_r},k_{\ell_r}+1) \cdots(k_{\ell_1},k_{\ell_1}+1),\quad\text{and}\quad \sigma_2 = (k_{p_n},k_{p_n}+1) \cdots(k_{p_1},k_{p_1}+1)
    \]
    for disjoint subsequences $k_{\ell_1},\ldots,k_{\ell_r}$ and $k_{p_1},\ldots,k_{p_n}$ with $r+n = m$. Since $\sigma_1$ only permutes elements of $\Z_{< x}$, we have that $k_{\ell_t} + 1 < x$ for $1 \le t \le r$. Since $\sigma_2$ only permutes elements of $\Z_{\ge x}$, we have $k_{p_t} + 1 > x$ for $1 \le t \le n$. Hence,
    $x \neq k_{\ell} + 1$ for $1 \le \ell \le m$.
\end{proof}

\begin{proof}[Proof of Theorem \ref{thm:polymer_invariance}.]
Let $\sigma \in \mathcal S_{\Z}$, and let $(U,V) \in \mathcal W^\sigma$ (recalling the definition of this set in \eqref{eq:Usigma_cond}). Denote the points of $U$ and $V$ as $U=\bigl((a_i,b_i)\bigr)_{i\in\llbracket1,k\rrbracket}$, $V=\bigl((c_i,d_i)\bigr)_{i\in\llbracket1,k\rrbracket}$. Let $\sigma \in \mathcal S_{\Z}$.  By assumption that $(U,V) \in \mathcal W^\sigma$,  Lemma \ref{lem:two_conditions_connect} implies that the conditions of Proposition \ref{prop:multi_column} are satisfied so that, for each $i\in\llbracket1,k\rrbracket$, 
\begin{equation} \label{eq:ZXsigmaX}
Z^{\mbf X}(c_j,d_j\mid a_i,b_i) = Z^{\mathcal P_\sigma \mbf X}(c_j,d_j\mid a_i,b_i).
\end{equation}
To extend this to the multi-path result, we use the Lindstr\"{o}m-Gessel-Viennot (LGV) lemma (cf.~\cite[Chapter 32]{Aigner-Ziegler-book}, originally from \cite{Lindstrom-1973,Gessel-Viennot-1985}) to prove the theorem. Because LGV applies only to edge-weighted graphs, we need to transfer our vertex weights to edge weights in the following way: for each vertex $(r,s)$, split it into $(r,s)_{\text{in}}$ and $(r,s)_{\text{out}}$, with edge $\bigl((r,s)_{\text{in}},(r,s)_{\text{out}}\bigr)$ given the weight $\omega_{(r,s)} := e^{X_{(r,s)}}$, and edges $\bigl((r,s)_{\text{out}},(r, s+1)_{\text{in}}\bigr)$ and $\bigl((r,s)_{\text{out}}, (r+1,s)_{\text{in}}\bigr)$ given the weight 1. See Figure \ref{fig:LGV_split}. Any path which started at $(r,s)$ starts at $(r,s)_{\text{in}}$, while any path ending at $(r,s)$ ends at $(r,s)_{\text{out}}$. Under this graph transformation, our partition function on the pre-transformed graph equals the sum over weight products along possible paths on the transformed diagram, so we can apply LGV.
    \begin{figure}[ht]
    \centering
    \begin{tikzpicture}[-Latex, every node/.style={font=\small}]

    \coordinate (v) at (0,0);
    \filldraw (v) circle (2pt); 
    \node at (0.5,0.3) {$X_{(r,s)}$};

    \draw[->] ($(v)+(-1.5,0)$) -- (v);       
    \draw[->] ($(v)+(0,-1.5)$) -- (v);       

    \draw[->] (v) -- ++(1.2,0);              
    \draw[->] (v) -- ++(0,1.2);              

    \draw[double equal sign distance, -Implies, thick] (2.4,0.3) -- (4.3,0.3);

    \node[fill=black, circle, inner sep=1pt, label=above:{in}] (in) at (6,0) {};
    \node[fill=black, circle, inner sep=1pt, label=below:{out}] (out) at (7.5,0) {};

    \draw[->] (in) -- (out) node[midway, below] {$\omega_{(r,s)}$};

    \draw[->] ($(in)+(-1.2,0)$) -- (in) node[midway, above] {1};     
    \draw[->] ($(in)+(0,-1.2)$) -- (in) node[midway, right] {1};     

    \draw[->] (out) -- ++(1.2,0) node[midway, above] {1};            
    \draw[->] (out) -- ++(0,1.2) node[midway, right] {1};            

    \end{tikzpicture}
    \caption{Local transformation of a vertex-weighted node (left) with weight $X_{(r,s)}$ into an edge-weighted directed graph (right), where the internal edge carries weight $\omega_{(r,s)}=\exp(X_{(r,s)})$ and external edges carry weight 1. Paths in the original graph correspond to paths from ``in'' nodes to ``out'' nodes in the transformed graph, preserving path weights.}
    \label{fig:LGV_split}
    \end{figure}
    
     \noindent Define the matrix $M^{\mbf X}:=\bigl(Z^{\mbf X}(c_j,d_j\mid a_i,b_i)\bigr)_{(i,j)\in\llbracket 1, m \rrbracket}$. By LGV, we have
    \[\det M^{\mbf X} = \sum\limits_{P=(P_1\ldots P_m)\in\Pi_{U, V_\tau} \ \forall\varphi\in \mathcal S_m} \operatorname{sgn}(\varphi) \prod\limits_{(i,j)\in P} e^{X_{(i,j)}}.\]
    Since $\mathcal W^\sigma \subseteq \Psi$, we have that, by definition of $\Psi$ \eqref{def:Psi}, for $\varphi\neq \Id$, $\Pi_{U,V_\varphi}=\varnothing$. Then, because $\operatorname{sgn}(\Id)=1$,
    \begin{equation} \label{eq:det = Z}
    \det M^{\mbf X}=\sum\limits_{P=(P_1\ldots P_m)\in\Pi_{U, V}} \prod\limits_{(i,j)\in P} e^{X_{(i,j)}}=Z^{\mbf X}(V\mid U).
    \end{equation}
    By \eqref{eq:ZXsigmaX}, for each $i,j$, we have $M_{i,j}^{\mbf X} = M_{i,j}^{\Pcal_\sigma \mbf X}$. Therefore, by \eqref{eq:det = Z}, we also have
    \[
    Z^{\Pcal_\sigma\mbf X}(V\mid U) = \det M^{\Pcal_\sigma \mbf X} = \det M^{\mbf X} =  Z^{\mbf X}(V\mid U). \qedhere
    \] 
\end{proof}

\section{Burke property and permutation invariance} \label{sec:Burke_permutation}
In this section, we prove Theorems \ref{thm:per_permutation-invariance} and \ref{thm:full_per_invariance}. We first need two intermediate lemmas. In the first, we generalize the Burke property in \cite[Proposition 5.12]{corwin2024periodicpitmantransformsjointly} to an inhomogeneous environment.
\begin{proposition}[Log-inverse-gamma Burke property] \label{burke-pos-temp}
    Let $\lambda > 0$, let $a_1,a_2 \in \R$, and let $\mbf b \in \R^{\ZN}$ be a sequence such that $a_m +b_i > 0$ for $m \in \{1,2\}$ and $\iZ$. Let $\mbf X_1, \mbf X_2$ be independent sequences of independent random variables such that $X_{m,i} \sim\LIG(a_m + b_i,\lambda)$ (see Definition \ref{def:log-inv-gamma}). Then,
    \[(\mbf X_1,\mbf X_2) \deq (D(\mbf X_1, \mbf X_2),T(\mbf X_1, \mbf X_2)).
    \]
\end{proposition}

\begin{proof}
    Using Lemmas \ref{lem:DT_ordering}-\ref{lem:DT_identities} in the penultimate equality below, the joint density $\Prob(\mbf X_1\in d\mbf x_1,\mbf X_2\in d\mbf x_2)$ is proportional to
    \begin{align*}
    &\exp\Biggl(-\sum_{i\in\ZN}\Bigl((a_1+b_i) x_{1,i}+(a_2+b_i)x_{2,i}\Bigr)-\lambda\sum_{i\in\ZN}(e^{-x_{1,i}}+e^{-x_{2,i}})\Biggr)\\
    &=\exp\Biggl(-a_1\sum_{i\in\ZN}x_{1,i}-a_2\sum_{i\in\ZN}x_{2,i}-\sum_{i\in\ZN}b_i(x_{1,i}+x_{2,i})-\lambda\sum_{i\in\ZN}(e^{-x_{2,i}}+e^{-x_{1,i+1}})\Biggr)\\
    &=\exp\Biggl(-a_1\sum_{i\in\ZN}D_i(\mbf x_1,\mbf x_{2})-a_2\sum_{i\in\ZN}T_i(\mbf x_1,\mbf x_{2})-\sum_{i\in\ZN}b_i(T_i(\mbf x_1,\mbf x_{2})+D_i(\mbf x_1,\mbf x_{2}))\\
    &\qquad\qquad\qquad\qquad -\lambda\sum_{i\in\ZN}(e^{-T_{i+1}(\mbf x_1,\mbf x_{2})}+e^{-D_i(\mbf x_1,\mbf x_{2})})\Biggr)\\
    &=\exp\Biggl(-\sum_{i\in\ZN}\Bigl((a_1+b_i)D_i(\mbf x_1,\mbf x_{2})+(a_2+b_i)T_i(\mbf x_1,\mbf x_{2})\Bigr)-\lambda\sum_{i\in\ZN}(e^{-D_i(\mbf x_1,\mbf x_{2})}+e^{-T_i(\mbf x_1,\mbf x_{2})})\Biggr).
    \end{align*}
    Thus, the  density is preserved under our transformation, so all we must show is that the determinant of the Jacobian matrix of the transformation has absolute value 1. This was shown previously in \cite{corwin2024periodicpitmantransformsjointly}, recorded in the present paper as Lemma \ref{lem:jacobian}.
\end{proof}

\begin{proof}[Proof of Theorem \ref{thm:per_permutation-invariance}.]
For an arbitrary finite permutation $\sigma$,
Theorem \ref{thm:polymer_invariance} implies that
\[
Z^{\Pcal_{\sigma}\mbf X^{\mbf a,\mbf b}}(V \mid U) =Z^{\mbf X^{\mbf a,\mbf b}}(V \mid U),\quad\text{for all }(U,V) \in \mathcal W^{\sigma}.
\]
 Furthermore, by successively applying the Burke property in Proposition \ref{burke-pos-temp}, we see that each operator $\Pcal_{k}$ swaps the column parameters in columns $k$ and $k+1$, so we have that $\Pcal_{\sigma}\mbf X^{\mbf a,\mbf b} \deq \mbf X^{\sigma \mbf a,\mbf b}$. This completes the proof.
\end{proof}

\begin{proof}[Proof of Theorem \ref{thm:full_per_invariance}.]
This is proved similarly as Theorem \ref{thm:per_permutation-invariance}. Consider the independent weights in the finite grid $\llbracket -M,M \rrbracket \times \llbracket -M,M \rrbracket$, and extend these periodically. Consider first end point pairs $(U,V)$ that consist of points lying in the finite grid $\llbracket -M,M \rrbracket \times \llbracket -M,M \rrbracket$, and apply the partition function invariance in Theorem \ref{thm:polymer_invariance} and the Burke property in Proposition \ref{burke-pos-temp} to both rows and columns, with $N = 2M + 1$. The result follows by sending $M \to \infty$.   
\end{proof}

\appendix

\section{Inputs from previous results and zero-temperature analogues} 
\subsection{Additional algebraic properties}\label{sec:DT-basic-properties}
In \cite{corwin2024periodicpitmantransformsjointly}, index-wise shifted versions of our $T$ and $D$ maps were denoted by $T^{N,2}$ and $D^{N,2}$ respectively. To be precise, the definition of the mappings in that paper were 
\begin{align*}
T_i^{N,2}(\mbf{\mbf{X}_1}, \mbf{\mbf{X}_2}) &= X_{1,i} + \log\Biggl(\frac{\sum_{j\in\ZN}e^{X_{2,[i,j]}-X_{1,[i,j]}}}{\sum_{j\in\ZN}e^{X_{2,[i+1,j]}-X_{1,[i+1,j]}}}\Biggr),\quad\text{and} \\
D_i^{N,2}(\mbf{\mbf{X}_1}, \mbf{\mbf{X}_2}) &= X_{2,i} + \log\Biggl(\frac{\sum_{j\in\ZN}e^{X_{2,(i,j]}-X_{1,(i,j]}}}{\sum_{j\in\ZN}e^{X_{2,(i-1,j]}-X_{1,(i-1,j]}}}\Biggr).
\end{align*}
In the present paper, we subsumed the dependence on $N$. From the definitions given in the present paper \eqref{eq:pitman-transform}, we can see that 
\be \label{eq:Tshift}
    T(\mbf X_1,\mbf X_2) = \tau^{-1} T^{N,2}(\mbf X_2,\tau \mbf X_1),\quad\text{and}\quad D(\mbf X_1,\mbf X_2) = D^{N,2}(\mbf X_2,\tau \mbf X_1),
\ee
where $\tau :\R^{\Z_N} \to \R^{\Z_N}$ is the shift map defined by $(\tau \mbf X)_i = X_{i+1}$.
With this in mind, we now cite two results from \cite{corwin2024periodicpitmantransformsjointly}, which follow from translating them into appropriate statements for the present paper via the connection in \eqref{eq:Tshift}.
\begin{lemma} \cite[Proposition 5.12]{corwin2024periodicpitmantransformsjointly} \label{lem:jacobian}
The determinant of the Jacobian of the transformation 
\[
\Pcal: \R^{\Z_N} \to \R^{\Z_N} \text{ defined by }(\mbf X_1,\mbf X_2)\to \bigl(T(\mbf X_1,\mbf X_2), D(\mbf X_1,\mbf X_2)\bigr)
\]
has absolute value $1$.
\end{lemma}

\begin{lemma}\cite[Proposition 5.13]{corwin2024periodicpitmantransformsjointly}\label{lem:braid-original} For $\mbf X_1,\mbf X_2,\mbf X_3 \in \R^{\Z_N}$,
\[D\bigl(D(\mbf X_1,\mbf X_2),\mbf X_3\bigr)=D\bigl(D(\mbf X_1,T(\mbf X_2,\mbf X_3)),D(\mbf X_2,\mbf X_3)).\]
\end{lemma}

\subsection{Proofs of the zero-temperature analogues in Theorems \ref{thm:zer_temp_braid}-\ref{thm:zer_temp_perm_invar}}
\label{appx:zero_temp}

Because polymer models have a strong connection to last-passage percolation (LPP), our algebraic results will naturally generalize to \textit{zero-temperature} equivalents, through limiting arguments. In these limits,  the $(+,\times)$ algebra is swapped for the $(\max,+)$ algebra, and our maps reflect this change. As introduced in \cite{corwin2024periodicpitmantransformsjointly}, the `correct' map to look at here is $\zD$, the uniform limit as $\beta$ goes to infinity of $\frac{1}{\beta}D(\beta \mbf X_1,\beta \mbf X_2)$, as well as $\zT$, which is reached through an identical limit. In this section, we use our algebraic results about the behavior of the polymer model under the periodic Pitman transform (as well as the aforementioned limiting arguments) to prove analogous facts about LPP under the zero-temperature periodic Pitman transform. The following intermediate results are needed.
\begin{lemma}\label{lem:unif-convergence}
    For $y_1,\ldots,y_k \in \R$, as $\beta\to\infty$, $\frac{1}{\beta}\log\Bigl(\sum\limits_{i=0}^k\exp(\beta y_i)\Bigr)$ converges to $\max\limits_i y_i$, uniformly as a function of the $y_i$.
\end{lemma}
 \begin{lemma} \label{lem:conv_to_LPP}
    For any $k\in\N$ and any pair of $k$-tuples of  ordered pairs $U=\bigl((a_i,b_i)\bigr)_{i\in\llbracket1,k\rrbracket}$, $V=\bigl((c_i,d_i)\bigr)_{i\in\llbracket1,k\rrbracket}$,  as $\beta \to \infty$,  $\frac{1}{\beta}\log\bigl(Z^{\beta,\mbf X}(V\mid U)\bigr)$ converges uniformly (as a function of $\mbf X$) to $G^{\mbf X}(V\mid U)$. 
    \begin{proof}
        This follows as a direct consequence of Lemma \ref{lem:unif-convergence}, which is itself a standard fact, since there are only finitely many multi-paths between any two sets of points.
    \end{proof}
\end{lemma}

We now prove Theorems \ref{thm:zer_temp_braid}-\ref{thm:zer_temp_perm_invar}.
    \begin{proof}[Proof of Theorem \ref{thm:zer_temp_braid}.]
 To show that $\widetilde{\Pcal}$ is an involution, we take a limit from the positive-temperature case:
\begin{align*}
    \zP(\zP(\mbf{X}_1,\mbf{X}_2)) &= \lim_{\beta\to\infty} \frac1\beta \Pcal(\beta \zP(\mbf X_1, \mbf X_2)) \\
    &=\lim_{\beta \to \infty } \frac{1}{\beta} \Pcal(\Pcal(\beta\mbf X_1, \beta \mbf X_2)) \\
    &= \lim_{\beta \to \infty} \frac{1}{\beta} (\beta \mbf X_1, \beta \mbf X_2) \\
    &= (\mbf X_1, \mbf X_2),
\end{align*}
where the penultimate equality follows from $\Pcal$ being an involution and the rest follow from the uniform convergence of $\Pcal$ via Lemma \ref{lem:unif-convergence}.

    Next, for $\beta > 0$, define the operators $\Pcal_{k}^\beta:(\R^{\ZN})^\Z \to (\R^{\ZN})^\Z$ by their action:
\be \label{Pibeta}
   \begin{aligned}
&\quad \, \Pcal_k^\beta(\ldots,\mbf X_{k-1},\mbf X_{k}, \mbf X_{k+1}, \mbf X_{k+2},\ldots)  \\
&= (\ldots,\mbf X_{k-1},\frac{1}{\beta}T(\beta\mbf X_{k},\beta\mbf X_{k+1}),\frac{1}{\beta}D(\beta\mbf X_{k},\beta\mbf X_{k+1}),\mbf X_{k+2},\ldots).
\end{aligned}
\ee
As an immediate consequence of Theorem \ref{thm:braid}\ref{it:braid},     
        we see that $\Pcal_{k}^\beta \Pcal_{k+1}^\beta \Pcal_{k}^\beta=\Pcal_{k+1}^\beta \Pcal_{k}^\beta \Pcal_{k+1}^\beta$. By Lemma \ref{lem:unif-convergence}, we see that $\Pcal_{k}^\beta$ converges, as $\beta \to \infty$, to $\zP_k$ (uniformly as a function of the input). Then, we have
        \[
\zP_{k}\zP_{k+1}\zP_{k}=\lim\limits_{\beta\to\infty}\Pcal_{k}^\beta \Pcal_{k+1}^\beta \Pcal_{k}^\beta=\lim\limits_{\beta\to\infty}\Pcal_{k+1}^\beta \Pcal_{k}^\beta \Pcal_{k+1}^\beta=\zP_{k+1}\zP_{k}\zP_{k+1}. \qedhere
        \]
        \end{proof}
\begin{proof}[Proof of Theorem \ref{thm:LPP_invar}.] As in Theorem \ref{thm:polymer_invariance}, we take $\sigma \in \mathcal S_{\Z}$ and $U=\bigl(a_i,b_i)_{i\in\llbracket1,m\rrbracket}\bigr)$, and $V=\bigl((c_i,d_i)_{i\in\llbracket1,m\rrbracket}\bigr)$ satisfying $(U,V)\in \mathcal W^\sigma$. To prove Theorem \ref{thm:LPP_invar}, we must show that $G^{\mbf X}(V\mid U)=G^{\zP_\sigma \mbf X}(V\mid U)$.
   By Theorem \ref{thm:polymer_invariance} and definition of the polymer model with  parameter $\beta > 0$ \eqref{eq:multipath_partition_function}, we have
\be \label{eq:Zpbeta} Z^{\beta,\mbf X}(V\mid U)=Z^{\beta\mbf X}(V\mid U)=Z^{\Pcal_{\sigma}(\beta \mbf X)}(V\mid U) = Z^{\beta(\frac1\beta\Pcal_{\sigma}(\beta \mbf X))}(V\mid U) = Z^{\beta,\Pcal_{\sigma}^{\beta}(\mbf X)}(V\mid U),\ee 
where in the last step, we simply define $\Pcal_{\sigma}^{\beta}(\mbf X) := \f{1}{\beta} \Pcal_\sigma(\beta \mbf X)$.

As mentioned in the proof of Theorem \ref{thm:zer_temp_braid}, each $\Pcal_k^\beta$ converges uniformly to $\zP_k$ as a function of the input. By writing $\Pcal_\sigma^\beta$ as a composition of these operators and using induction, we have that $\Pcal_\sigma^\beta$ converges to $\zP_\sigma$. By Lemma \ref{lem:conv_to_LPP}, we know that $\frac{1}{\beta}\log\bigl(Z^{\beta,\mbf X}(V\mid U)\bigr)$ converges to $G^\mbf X(V\mid U)$ uniformly as a function of $\mbf X$. Hence, using \eqref{eq:Zpbeta} in the middle equality below, we have
    \[
    G^\mbf X(V\mid U)=\lim_{\beta \to \infty} \f{1}{\beta } \log Z^{\beta, \mbf X}(V \mid U) =  \lim_{\beta \to \infty} \f{1}{\beta } \log Z^{\beta, \Pcal_\sigma^\beta(\mbf X)}(V \mid U)  = G^{\zP_{\sigma}\mbf X}(V\mid U). \qedhere
    \]
\end{proof}

Before proving Theorem \ref{thm:zer_temp_perm_invar}, we first need prove that $\widetilde{\Pcal}$, restricted in its domain and range to nonnegative entries, is an involution. Then we can find analogous Burke properties in the geometric and exponential cases.

\begin{lemma} \label{W-bijection}
$\zP|_{\R^{\ZN}_{\geq 0} \times \R^{\ZN}_{\geq 0}}$ is an involution on $\R^{\ZN}_{\geq 0} \times \R^{\ZN}_{\geq 0}$.
\end{lemma}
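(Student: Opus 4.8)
The plan is to bootstrap from Lemma~\ref{lem:W-involution}, which already shows that $\zP$ is an involution on all of $\R^{\ZN}\times\R^{\ZN}$. Since the restriction of an involution to an invariant subset is automatically an involution on that subset, it suffices to prove the single containment $\zP(\R^{\ZN}_{\geq 0}\times\R^{\ZN}_{\geq 0})\subseteq\R^{\ZN}_{\geq 0}\times\R^{\ZN}_{\geq 0}$; that is, whenever $\mbf W_1,\mbf W_2$ have all coordinates nonnegative, so do $\zT(\mbf W_2,\mbf W_1)$ and $\zD(\mbf W_2,\mbf W_1)$.

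First I would unwind the cyclic-sum maxima in \eqref{eq:zero_temp_Pitman}. With $Y_\ell=W_{1,\ell+1}-W_{2,\ell}$ and $A_k:=Y_i+Y_{i+1}+\cdots+Y_{i+k-1}$ (so $A_0=0$ and $A_N=\vecsum(\mbf Y)$), as $\jZ$ ranges over $\ZN$ the sums $Y_{[i,j]}$ run exactly through $A_1,\ldots,A_N$, while $Y_{[i-1,j]}$ runs through $Y_{i-1}+A_0,\ldots,Y_{i-1}+A_{N-1}$. Using $W_{2,i-1}+Y_{i-1}=W_{1,i}$, this collapses the $\zT$-component to
\[
\zT(\mbf W_2,\mbf W_1)_i=W_{1,i}+\max(0,A_1,\ldots,A_{N-1})-\max(A_1,\ldots,A_N).
\]
Then I split on where the last maximum is attained. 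If it is attained at some $A_k$ with $k\le N-1$, then $\max(0,A_1,\ldots,A_{N-1})\ge\max(A_1,\ldots,A_N)$, so $\zT(\mbf W_2,\mbf W_1)_i\ge W_{1,i}\ge0$. If it is attained at $A_N$, then the $N$-periodicity identity $A_N=A_{N-1}+Y_{i-1}$ together with $\max(0,A_1,\ldots,A_{N-1})\ge A_{N-1}$ gives $\zT(\mbf W_2,\mbf W_1)_i\ge W_{1,i}-Y_{i-1}=W_{2,i-1}\ge0$. Hence $\zT(\mbf W_2,\mbf W_1)_i\ge\min(W_{1,i},W_{2,i-1})\ge0$ for every $i$.

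An entirely parallel computation handles $\zD$. Writing $B_k:=Y_{i+1}+\cdots+Y_{i+k}$ (so $B_0=0$), the sums $Y_{(i,j]}$ run through $B_0,\ldots,B_{N-1}$ and the sums $Y_{(i-1,j]}$ through $0$ and $Y_i+B_0,\ldots,Y_i+B_{N-2}$; setting $P=\max(0,B_1,\ldots,B_{N-2})$ and $Q=\max(0,B_1,\ldots,B_{N-1})\ge P$, one gets $\zD(\mbf W_2,\mbf W_1)_i=W_{1,i+1}+Q-\max(0,Y_i+P)$. If $Y_i+P\le0$ this equals $W_{1,i+1}+Q\ge0$; if $Y_i+P>0$ it equals $W_{1,i+1}+Q-P-Y_i\ge W_{1,i+1}-Y_i=W_{2,i}\ge0$. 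So every coordinate of both output vectors is nonnegative, which proves the containment and hence the lemma.

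The only part that needs care — more a bookkeeping point than a genuine obstacle — is the first step: confirming that as $\jZ$ varies the cyclic sums $Y_{[i,j]}$ and $Y_{(i,j]}$ enumerate precisely the partial sums of the stated lengths, and correctly tracking the periodic index identity $Y_{i+N-1}=Y_{i-1}$ used in $A_N=A_{N-1}+Y_{i-1}$. Once those are pinned down, each coordinate bound is a one-line case split, and the involution property is free from Lemma~\ref{lem:W-involution}.
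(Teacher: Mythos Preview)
Your proposal is correct and follows essentially the same approach as the paper: both reduce to the containment $\zP(\R^{\ZN}_{\ge0}\times\R^{\ZN}_{\ge0})\subseteq\R^{\ZN}_{\ge0}\times\R^{\ZN}_{\ge0}$ via Lemma~\ref{lem:W-involution}, then verify coordinatewise nonnegativity of $\zT$ and $\zD$ by comparing the two cyclic maxima. Your explicit partial-sum notation $A_k,B_k$ and case split on where the maximum is attained are just a cleaner repackaging of the paper's chain of inequalities $\max_j Y_{(i-1,j]}\le\cdots\le X_{2,i+1}+\max_j Y_{(i,j]}$.
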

\begin{proof}
Since $\zP$ is an involution on $\R^{\Z_N} \times \R^{\Z_N}$ by Theorem \ref{thm:zer_temp_braid}, it suffices to show that $(\mbf X_1,\mbf X_2) \in \R^{\ZN}_{\geq 0}\times \R^{\ZN}_{\geq 0}$ implies $\zT(\mbf X_1,\mbf X_2) \in \R_{\ge 0}^{\Z_N}$ and $\zD(\mbf X_1, \mbf X_2) \in \R^{\ZN}_{\geq 0}$. Fix any $i \in \Z_N$, and take $\mbf X_1,\mbf X_2 \in \R^{\ZN}_{\geq 0}$. Set $Y_i =X_{1,i+1} - X_{2,i}$ for all $\iZ$. We observe that 

\begin{align*}
    Y_{(i-1, j]}=\begin{cases}Y_i+ Y_{(i, j]},&\text{ if }\jZ \setminus \{i-1\}\\0,&\text{ if }j=i-1 \end{cases}
\end{align*}

It follows that:
\begin{align*}
    \max_{j\in\ZN} Y_{(i-1, j]} &= 0\vee\left(Y_i+\max_{j\in\ZN\setminus \{i-1\}}Y_{(i,j]}\right)\\
    &\le 0\vee\left(Y_i+\max_{j\in\ZN}Y_{(i,j]}\right)\\
    &\le 0 \vee \left(X_{1,i+1}+\max_{j\in\ZN}Y_{(i,j]}\right) \\
    &\leq X_{1,i+1}+\max_{j\in\ZN}Y_{(i,j]}.
\end{align*}

Similarly, we have that $Y_{[i, j]}=\begin{cases}-Y_{i-1}+ Y_{[i-1, j]},&\text{ if }\jZ \setminus \{i-1\} \\Y_{[i-1,i-2]},&\text{ if }j=i-1 \end{cases}$

Therefore, we know
\begin{align*}
    \max_{j\in\ZN} Y_{[i, j]} &= Y_{[i-1,i-2]}\vee\left(-Y_{i-1}+\max_{j\in\ZN\setminus \{i-1\}}Y_{[i-1,j]}\right)\\
    &\le \max_{j\in\ZN}Y_{[i-1,j]}\vee\left(X_{2,i-1}+\max_{j\in\ZN}Y_{[i-1,j]}\right)\\
    &\le X_{2,i-1}+\max_{j\in\ZN}Y_{[i-1,j]}.
\end{align*}

By the definitions in \eqref{eq:zero_temp_Pitman}, we see that $\zT_i(\mathbf{X}_1, \mathbf{X}_2)\ge 0$ and $ \zD_i(\mathbf{X}_1, \mathbf{X}_2) \geq 0$ for all $\iZ$. 
\end{proof}

        \begin{proposition}[Geometric Burke property] \label{burke_geom}
    Take $\mbf{X}_1, \mbf{X}_2 \in \Z_{\geq 0}^{\ZN}$ to be independent sequences of independent random variables such that $X_{1, i} \sim \Geom(p_1q_i)$ and $X_{2, i} \sim \Geom(p_2q_i)$ for all $\iZ$, where $p_m q_i \in (0,1)$ for $m \in \{1,2\}$ and $i \in \Z_N$. Then,
    \[ (\mbf{X}_1, \mbf{X}_2) \deq (\zD(\mbf{X}_1, \mbf{X}_2), \zT(\mbf{X}_1, \mbf{X}_2)). \]
\end{proposition}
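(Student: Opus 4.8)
\emph{Proof proposal.} The plan is to follow the template of the proof of Proposition~\ref{burke-pos-temp}, with the Jacobian computation replaced by the observation that in the discrete setting the relevant change of variables is an honest bijection of the state space $\Z_{\ge 0}^{\ZN}\times\Z_{\ge 0}^{\ZN}$. Write $\Phi(\mbf x_1,\mbf x_2):=(\zT(\mbf x_1,\mbf x_2),\zD(\mbf x_1,\mbf x_2))$ for the map appearing in the statement. I will establish two facts: (a) $\Phi$ restricts to a bijection of $\Z_{\ge 0}^{\ZN}\times\Z_{\ge 0}^{\ZN}$ onto itself; and (b) the joint probability mass function of $(\mbf X_1,\mbf X_2)$ is invariant under $\Phi$. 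Granting these, for any $(\mbf y_1,\mbf y_2)\in\Z_{\ge 0}^{\ZN}\times\Z_{\ge 0}^{\ZN}$ the event $\{\Phi(\mbf X_1,\mbf X_2)=(\mbf y_1,\mbf y_2)\}$ equals $\{(\mbf X_1,\mbf X_2)=\Phi^{-1}(\mbf y_1,\mbf y_2)\}$ by (a), and its probability is the mass function evaluated at $\Phi^{-1}(\mbf y_1,\mbf y_2)$, which by (b) equals the mass function at $(\mbf y_1,\mbf y_2)$; this is exactly $\Prob(\mbf X_1=\mbf y_1,\mbf X_2=\mbf y_2)$, giving $(\mbf X_1,\mbf X_2)\deq(\zT(\mbf X_1,\mbf X_2),\zD(\mbf X_1,\mbf X_2))$.

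For (a), observe that $\Phi=\zP\circ S$, where $S(\mbf a,\mbf b)=(\mbf b,\mbf a)$ and $\zP$ is the zero-temperature periodic Pitman transform of \eqref{eq:zero_temp_Pitman}; indeed $\zP(S(\mbf x_1,\mbf x_2))=\zP(\mbf x_2,\mbf x_1)=(\zT(\mbf x_1,\mbf x_2),\zD(\mbf x_1,\mbf x_2))$. Since $S$ is a bijection of $\Z_{\ge 0}^{\ZN}\times\Z_{\ge 0}^{\ZN}$, it suffices to show the same for $\zP$. From \eqref{eq:zero_temp_Pitman}, every coordinate of $\zT(\mbf x_1,\mbf x_2)$ and $\zD(\mbf x_1,\mbf x_2)$ is built from the entries of $\mbf x_1,\mbf x_2$ using only additions, subtractions, and maxima, so $\zP$ carries integer tuples to integer tuples; by Lemma~\ref{W-bijection} it is moreover an involution on $\R_{\ge 0}^{\ZN}\times\R_{\ge 0}^{\ZN}$, hence it preserves nonnegativity and, being its own two-sided inverse, restricts to a bijection of $\Z_{\ge 0}^{\ZN}\times\Z_{\ge 0}^{\ZN}$.

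For (b), by independence the joint mass function is
\[
\Prob(\mbf X_1=\mbf x_1,\ \mbf X_2=\mbf x_2)=\Biggl(\prod_{\iZ}(1-p_1q_i)(1-p_2q_i)\Biggr)\, p_1^{\vecsum(\mbf x_1)}\,p_2^{\vecsum(\mbf x_2)}\prod_{\iZ}q_i^{\,x_{1,i}+x_{2,i}}.
\]
Replacing $(\mbf x_1,\mbf x_2)$ by $\Phi(\mbf x_1,\mbf x_2)$, the exponent of $p_1$ becomes $\vecsum(\zT(\mbf x_1,\mbf x_2))=\vecsum(\mbf x_1)$ and the exponent of $p_2$ becomes $\vecsum(\zD(\mbf x_1,\mbf x_2))=\vecsum(\mbf x_2)$, both by Lemma~\ref{lem:DT_ordering}(iii)--(iv), while for each $\iZ$ the exponent of $q_i$ becomes $\zT_i(\mbf x_1,\mbf x_2)+\zD_i(\mbf x_1,\mbf x_2)=x_{1,i}+x_{2,i}$ by Lemma~\ref{lem:DT_identities}(iii); the prefactor is untouched. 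Hence the mass function is $\Phi$-invariant, which together with (a) completes the proof.

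The step I expect to require the most care is (a) --- the discrete surrogate for ``the Jacobian has absolute value $1$'' in Proposition~\ref{burke-pos-temp} --- and in particular the claim that $\zP$ maps $\Z_{\ge 0}^{\ZN}\times\Z_{\ge 0}^{\ZN}$ into itself; once this is settled, (b) is routine bookkeeping with Lemmas~\ref{lem:DT_ordering} and~\ref{lem:DT_identities}. I note that one should not try to obtain Proposition~\ref{burke_geom} as a $\beta\to\infty$ limit of Proposition~\ref{burke-pos-temp}, since log-inverse-gamma weights degenerate to exponential rather than geometric ones; such a limiting argument is instead available for the exponential Burke property.
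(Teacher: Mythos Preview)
Your proof is correct and follows essentially the same approach as the paper's: both compute the joint mass function, use Lemmas~\ref{lem:DT_ordering}(iii)--(iv) and~\ref{lem:DT_identities}(iii) to show it is invariant under the transform, and invoke Lemma~\ref{W-bijection} (together with the evident integrality of $\zP$) for the bijection step. Your presentation is slightly more explicit in separating the bijection claim from the mass-function invariance and in noting that integrality of $\zP$ follows from its $(\max,+,-)$ structure, but the substance is identical.
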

\begin{proof}

Letting $\mbf{k}_1,\mbf{k}_2\in \Z_{\ge 0}^{N}$ where $k_{m,i}$ is the $i$th component of $\mbf{k}_m$, we have that
\begin{align*}
\Prob(\mbf{X}_1=\mbf{k}_1,\mbf{X}_2=\mbf{k}_2) &= \Prob\left(X_{1,1}=k_{1,1}\right) \cdots \Prob \left(X_{1,N}=k_{1,N}\right)\Prob \left(X_{2,1}=k_{2,1}\right)\cdots \Prob \left(X_{2,N}=k_{2,N}\right)\\
    &=\left(\prod\limits_{i = 1}^N (1-p_1 q_i) (1-p_2 q_i) \right ) p_1^{\vecsum(\mbf{k}_1)} p_2^{\vecsum(\mbf{k}_2)} \prod_{i=1}^{N} q_i^{k_{1, i} + k_{2, i}}  \\
    &= \left(\prod\limits_{i = 1}^N (1-p_1 q_i) (1-p_2 q_i) \right ) p_1^{\vecsum(\zD(\mbf{k}_1,\mbf{k}_2))} p_2^{\vecsum(\zT(\mbf{k}_1,\mbf{k}_2))} \prod_{i=1}^{N} q_i^{\zD_i(\mbf{k}_1, \mbf{k}_2) + \zT_i(\mbf{k}_1, \mbf{k}_2)}\\
    &=\prod\limits_{i = 1}^N(1-p_1q_i)(p_1q_i)^{\zD_{i}(\mbf{k}_1, \mbf{k}_2)} \prod\limits_{i = 1}^N(1-p_2q_i)(p_2q_i)^{\zT_{i}(\mbf{k}_1, \mbf{k}_2)} \\
    &= \Prob(\mbf{X}_1 = \zD(\mbf{k}_1, \mbf{k}_2), \mbf{X}_2 = \zT(\mbf{k}_1, \mbf{k}_2)) \\
    &= \Prob(\zD (\mbf{X}_1, \mbf{X}_2) = \mbf{k}_1, \zT(\mbf{X}_1, \mbf{X}_2) = \mbf{k}_2)
\end{align*}
where the first equality follows from the independence of $\mbf X_1,\mbf X_2$ and $X_{m,i},X_{m,j}$ for $m \in \{1,2\}$ and $i,\jZ$ where $i \neq j$; the second equality follows from our definition of the geometric distribution; the third equality follows from Lemmas \ref{lem:DT_ordering} and \ref{lem:DT_identities}\ref{sum-Di-Ti}; and the final equality follows from the bijection in Lemma \ref{W-bijection}.
\end{proof}

\begin{proposition}[Exponential Burke property] \label{burke_exp}
    Let $\mbf{X}_1, \mbf{X}_2 \in \Z_{\geq 0}^{\ZN}$  be independent sequences of independent random variables such that $X_{1, i} \sim \Exp(a_1  + b_i)$ and $X_{2, i} \sim \Exp(a_2 + b_i)$ for all $\iZ$ where $a_m + b_i > 0$ for $m \in \{1,2\}$ and $\iZ$. Then,

$$(\mbf{X}_1,\mbf{X}_2) \deq (\zD(\mbf{X}_1,\mbf{X}_2),\zT(\mbf{X}_1,\mbf{X}_2)).$$
\end{proposition}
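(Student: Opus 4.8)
The plan is to deduce the exponential Burke property from the geometric one (Proposition \ref{burke_geom}) by a scaling limit, in the same spirit in which the other zero-temperature statements in this paper are obtained from their positive-temperature counterparts. Two elementary facts drive this. First, the maps $\zT,\zD$ are continuous and positively homogeneous: $\zT(c\mbf X_1,c\mbf X_2)=c\,\zT(\mbf X_1,\mbf X_2)$ and $\zD(c\mbf X_1,c\mbf X_2)=c\,\zD(\mbf X_1,\mbf X_2)$ for every $c>0$, which is immediate from \eqref{eq:zero_temp_Pitman} since scaling all coordinates scales every $Y_\ell$ and hence every $\max_{\jZ}Y_{[\cdot,j]}$ by $c$. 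Second, a geometric random variable degenerates to an exponential one under the usual rescaling.

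Concretely, I would proceed as follows. Fix $\epsilon>0$ small, and set $p_m:=e^{-\epsilon a_m}$ for $m\in\{1,2\}$ and $q_i:=e^{-\epsilon b_i}$ for $\iZ$, so that $p_mq_i=e^{-\epsilon(a_m+b_i)}\in(0,1)$, precisely because $a_m+b_i>0$. Let $\mbf X_1^\epsilon,\mbf X_2^\epsilon$ be independent sequences of independent variables with $X_{m,i}^\epsilon\sim\Geom(p_mq_i)$. Using $\Prob(X_{m,i}^\epsilon\ge n)=(p_mq_i)^n$ one checks that $\epsilon X_{m,i}^\epsilon$ converges in distribution to $\Exp(a_m+b_i)$ as $\epsilon\to0^+$; since the coordinates are independent, the vector $(\epsilon\mbf X_1^\epsilon,\epsilon\mbf X_2^\epsilon)$ converges in distribution to $(\mbf X_1,\mbf X_2)$ with the target exponential law. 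Proposition \ref{burke_geom} gives $(\mbf X_1^\epsilon,\mbf X_2^\epsilon)\deq(\zT(\mbf X_1^\epsilon,\mbf X_2^\epsilon),\zD(\mbf X_1^\epsilon,\mbf X_2^\epsilon))$; multiplying through by $\epsilon$ and invoking homogeneity, this is
\[
(\epsilon\mbf X_1^\epsilon,\epsilon\mbf X_2^\epsilon)\;\deq\;\bigl(\zT(\epsilon\mbf X_1^\epsilon,\epsilon\mbf X_2^\epsilon),\,\zD(\epsilon\mbf X_1^\epsilon,\epsilon\mbf X_2^\epsilon)\bigr).
\]
Letting $\epsilon\to0^+$, the left side converges in distribution to $(\mbf X_1,\mbf X_2)$, while by the continuous mapping theorem applied to the continuous map $(\mbf y_1,\mbf y_2)\mapsto(\zT(\mbf y_1,\mbf y_2),\zD(\mbf y_1,\mbf y_2))$ the right side converges in distribution to $(\zT(\mbf X_1,\mbf X_2),\zD(\mbf X_1,\mbf X_2))$. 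Since a distributional identity passes to weak limits, this yields $(\mbf X_1,\mbf X_2)\deq(\zT(\mbf X_1,\mbf X_2),\zD(\mbf X_1,\mbf X_2))$, which is the claim.

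I expect no serious obstacle; the two points requiring care are mild. The first is the choice of geometric parameters: the additive rate structure $a_m+b_i$ of the target exponentials must be matched to the multiplicative parameter $p_mq_i$ of the geometric law, which is exactly what $p_m=e^{-\epsilon a_m}$, $q_i=e^{-\epsilon b_i}$ accomplishes, and this is the only place the hypothesis $a_m+b_i>0$ is used (to keep $p_mq_i\in(0,1)$). The second is the routine weak-convergence bookkeeping: joint convergence of $(\epsilon\mbf X_1^\epsilon,\epsilon\mbf X_2^\epsilon)$ from coordinatewise convergence plus independence, and the continuous mapping theorem for the $\R^{\ZN}\times\R^{\ZN}$-valued transform. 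An alternative would be to mimic the density computation in the proof of Proposition \ref{burke_geom} directly, replacing mass functions by densities and using Lemmas \ref{lem:DT_ordering} and \ref{lem:DT_identities}; that route is less clean, as it requires verifying that the Jacobian of $(\mbf X_1,\mbf X_2)\mapsto(\zT(\mbf X_1,\mbf X_2),\zD(\mbf X_1,\mbf X_2))$ has absolute value $1$ on each region of linearity — obtainable as the $\beta\to\infty$ limit of Lemma \ref{lem:jacobian} — whereas the scaling-limit argument sidesteps this entirely.
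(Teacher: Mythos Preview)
Your proposal is correct and follows essentially the same approach as the paper: both deduce the exponential case from the geometric Burke property (Proposition~\ref{burke_geom}) via a scaling limit, using the positive homogeneity and continuity of $\zT,\zD$. The only cosmetic difference is the choice of geometric parameters---the paper takes $p_m=1-a_m/n$, $q_i=1-b_i/n$ and sends $n\to\infty$, while you take $p_m=e^{-\epsilon a_m}$, $q_i=e^{-\epsilon b_i}$ and send $\epsilon\to0$---but both parametrizations yield the same exponential limit and the arguments are otherwise identical.
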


\begin{proof}
 For $m \in \{1,2\}$ and $\iZ$, define 
\begin{align*}
    X_{m,i}' \sim \Geom\Biggl(\left(1-\frac{a_m }{n}\right)\left(1 - \f{b_i}{n}\right)\Biggr).
\end{align*}
Then, since $\left(1-\frac{a_m }{n}\right)\left(1 - \f{b_i}{n}\right) = 1 - \f{a_m + b_i}{n} + O(n^{-2})$, we have that
\begin{align*}
    \lim\limits_{n \to \infty} \frac{X_{m,i}'}{n} \deq X_{m,i} \sim \Exp(a_m + b_i), \quad\text{for }m \in \{1,2\},\text{ and }\iZ
\end{align*}
Observe from the definitions of $\zT$ and $\zD$ \eqref{eq:zero_temp_Pitman} that 
\[
\f{1}{n}\zT(\mbf X_1',\mbf X_2') = \zT\Bigl(\f{\mbf X_1'}{n},\f{\mbf X_2'}{n}\Bigr),\quad\text{and}\quad  \f{1}{n}\zD(\mbf X_1',\mbf X_2') = \zD\Bigl(\f{\mbf X_1'}{n},\f{\mbf X_2'}{n}\Bigr).
\]
By Proposition \ref{burke_geom}, we know that $(\mbf{X}_1',\mbf X_2') \deq (\zD(\mbf{X}_1',\mbf{X}_2'),\zT(\mbf{X}_1',\mbf{X}_2'))$. Since $\max$, addition, and subtraction are continuous, $\zD$ and $\zT$ are continuous maps.  We then have that  
\[
\Bigl(\zD(\mbf X_1,\mbf X_2),\zT(\mbf X_1,\mbf X_2)\Bigr) \deq \lim\limits_{n \to \infty} \frac{1}{n}\Bigl(\zD(\mbf X_1',\mbf X_2'),\zT(\mbf X_1',\mbf X_2')\Bigr) \deq \lim\limits_{n \to \infty} \frac{1}{n} (\mbf{X}_1', \mbf{X}_2') \deq (\mbf X_1,\mbf X_2).\qedhere
\]
\end{proof}

\begin{proof}[Proof of Theorem \ref{thm:zer_temp_perm_invar}]
The proof is the same as the proof of Theorems \ref{thm:per_permutation-invariance} and \ref{thm:full_per_invariance}: replace the use of Theorem \ref{thm:polymer_invariance} with Theorem \ref{thm:LPP_invar} and replace the Burke property in Propositions \ref{burke-pos-temp} with the Burke property in Propositions \ref{burke_geom} and \ref{burke_exp}.
\end{proof}

\bibliographystyle{alpha}
\bibliography{bibliography}
\end{document}